\newcommand{\R}{\mathbb{R}}
\newcommand{\C}{\mathbb{C}}
\newcommand{\Z}{\mathbb{Z}}
\newcommand{\calM}{\mathcal{M}}
\newcommand{\calS}{\mathcal{S}}
\newcommand{\bla}{\big \langle}
\newcommand{\bra}{\big \rangle}
\newcommand{\vare}{\varepsilon}
\numberwithin{equation}{section}
\newcommand{\dist}[0]{\operatorname{dist}}
\newcommand{\BMO}[0]{\operatorname{BMO}}
\newcommand{\good}[0]{\operatorname{good}}
\newcommand{\calD}[0]{\mathcal{D}}
\theoremstyle{plain}
\newtheorem{thm}[equation]{Theorem}
\newtheorem{lem}[equation]{Lemma}
\newtheorem{prop}[equation]{Proposition}
\newtheorem{cor}[equation]{Corollary}
\theoremstyle{definition}
\newtheorem{defn}[equation]{Definition}
\theoremstyle{remark}
\title[Bilinear representation theorem]{Bilinear representation theorem}
\author{Kangwei Li}
\address[K.L.]{BCAM (Basque Center for Applied Mathematics), Alameda de Mazarredo 14, 48009 Bilbao, Spain} 
\email{kli@bcamath.org}
\author{Henri Martikainen}
\address[H.M.]{Department of Mathematics and Statistics, University of Helsinki, P.O.B. 68, FI-00014 University of Helsinki, Finland}
\email{henri.martikainen@helsinki.fi}
\author{Yumeng Ou}
\address[Y.O.]{Department of Mathematics, Massachusetts Institute of Technology, 77 Massachusetts Avenue, Cambridge, MA 02139, USA}
\email{yumengou@mit.edu}
\author{Emil Vuorinen}
\address[E.V.]{Department of Mathematics and Statistics, University of Helsinki, P.O.B. 68, FI-00014 University of Helsinki, Finland}
\email{emil.vuorinen@helsinki.fi}
\subjclass[2010]{42B20}
\keywords{Dyadic analysis, Calder\'on--Zygmund operators, model operators, dyadic shifts, bilinear analysis, representation theorems, $T1$ theorems, weighted theory} 
\begin{document}

\begin{abstract}
We represent a general bilinear Calder\'on--Zygmund operator as a sum of simple dyadic operators. The appearing dyadic operators
also admit a simple proof of a sparse bound. In particular, the representation implies a so called sparse $T1$ theorem for bilinear singular integrals.
\end{abstract}

\maketitle

\section{Introduction}
In this paper we show the exact dyadic structure behind \emph{bilinear} Calder\'on--Zygmund operators by representing them using simple dyadic operators, namely
some cancellative bilinear shifts and bilinear paraproducts. In the linear case Petermichl \cite{Pe} first represented the Hilbert transform in this way, and later
Hyt\"onen \cite{Hy} proved a representation theorem for all linear Calder\'on--Zygmund operators.

The representation theorems were originally motivated by the sharp weighted $A_p$ theory, but certainly also have other value and intrinsic interest.
For example, a representation theorem holds also in the bi-parameter setting as shown by one of us \cite{Ma1} (the multi-parameter extension of this is also by one of us \cite{Yo}), and in this context the representation has proved to be very useful e.g. in connection
with bi-parameter commutators, see \cite{HPW} and \cite{OPS}.

Outside the multi-parameter context it is true that sparse domination results yield sharp weighted bounds, and that sparse domination can also be proved directly (without going through a representation).
Such proofs usually start from the unweighted boundedness assumption, then conclude some weak type estimates, and then finally go about proving the sparse domination. However, we think
that the idea of a so called sparse $T1$, as coined by Lacey--Mena \cite{LM}, is extremely practical.  This amounts to concluding a sparse bound directly from the $T1$ assumptions (by modifying
the probabilistic $T1$ proof), and then noting
that the sparse bound implies all the standard boundedness properties (even weak type). Such a combination gives everything in one blow.

We think that a very efficient way to go about things is to first prove a sharp form of a representation theorem working directly from the $T1$ assumptions. This is interesting on its own right,
entails $T1$, gives an explicit equality containing the full dyadic structure of the operator,
and can even be used to transfer sparse bounds, at least in the form sense, from the model dyadic operators to the singular integral. This strategy was employed in the linear setting by Culiuc, Di Plinio and one of us in \cite{CPO}, but
of course they were able to cite the linear representation theorem with $T1$ assumptions from previous literature \cite{Hy2}. It is also to be noted that sparse bounds are remarkably simple to prove for dyadic model operators using the method of \cite{CPO}.

In this paper we, for the first time, prove a representation theorem in the bilinear setting, and we do it starting from the bilinear $T1$ assumptions.
Moreover, we carry out the above strategy in the bilinear setting i.e. we prove sparse domination for our model operators and then transfer them back to the singular integral.
In particular, we get a sparse bilinear $T1$ implying directly the boundedness of singular integrals from $L^p \times L^q$ to $L^r$ for all $1 < p, q < \infty$ and $1/2 < r < \infty$ satisfying $1/p + 1/q = 1/r$, and even the boundedness from $L^1 \times L^1$ to $L^{1/2,\infty}$, just from the $T1$ assumptions. Of course, one can also recover known sharp weighted bounds (see e.g. \cite{LMS}) from sparse domination. It is to be noted though that we prove sparse domination in the trilinear form sense, as such bounds
are easy to transfer using the representation. A caveat regarding weighted bounds is that outside the Banach range the literature currently seems to lack an argument giving \emph{sharp} weighted bounds from form type domination (but such
bounds can be derived using pointwise sparse domination \cite{CR}, \cite{LN}).

The proof of the representation entails finding a dyadic--probabilistic proof technique which produces only simple model operators. Some bilinear dyadic--probabilistic methods were studied by two of us in \cite{MV} in the non-homogeneous setting.
However, there seems to be a plethora of possible ways to decompose things in the bilinear setting, and one
has to be quite careful to really get only nice shifts and nice paraproducts (such that can easily be seen to obey sparse domination). We now move on to formulating some basic definitions and stating our theorems.

A function
$$
K\colon (\R^n \times \R^n \times \R^n) \setminus \Delta \to \C, \qquad \Delta := \{(x,y,z) \in \R^n \times \R^n \times \R^n\colon\, x = y = z\},
$$
is called a standard bilinear Calder\'on--Zygmund kernel if for some $\alpha \in (0,1]$ and $C_K < \infty$ it holds that
$$
|K(x,y,z)| \le \frac{C_K}{(|x-y| + |x-z|)^{2n}},
$$ 
$$
|K(x,y,z)-K(x',y,z)| \le C_K\frac{|x-x'|^{\alpha}}{(|x-y| + |x-z|)^{2n+\alpha}}
$$
whenever $|x-x'| \le \max(|x-y|, |x-z|)/2$,
$$
|K(x,y,z)-K(x,y',z)| \le C_K\frac{|y-y'|^{\alpha}}{(|x-y| + |x-z|)^{2n+\alpha}}
$$
whenever $|y-y'| \le \max(|x-y|, |x-z|)/2$, and
$$
|K(x,y,z)-K(x,y,z')| \le C_K\frac{|z-z'|^{\alpha}}{(|x-y| + |x-z|)^{2n+\alpha}}
$$
whenever $|z-z'| \le \max(|x-y|, |x-z|)/2$. The best constant $C_K$ is denoted by $\|K\|_{\operatorname{CZ}_{\alpha}}$.

Given a standard bilinear Calder\'on--Zygmund kernel $K$
we define
$$
T_{\varepsilon}(f,g)(x)  = \iint_{\max(|x-y|, |x-z|) > \varepsilon} K(x,y,z) f(y)g(z)\,dy\,dz.
$$
The above is well-defined as an absolutely convergent integral if e.g. $f \in L^{p_1}$ and $g \in L^{p_2}$ for some $p_1, p_2 \in [1, \infty)$, since
then
$$
\iint_{\max(|x-y|, |x-z|) > \varepsilon} |K(x,y,z) f(y)g(z)|\,dy\,dz \lesssim \frac{1}{\varepsilon^{m(1/p_1+1/p_2)}} \|f\|_{L^{p_1}} \|g\|_{L^{p_2}}. 
$$
For us a bilinear Calder\'on--Zygmund operator is essentially the family of truncations $(T_{\varepsilon})_{\varepsilon > 0}$. In particular, this means
that boundedness in some $L^p$ spaces is understood in the sense that all $T_{\varepsilon}$ are bounded uniformly in $\varepsilon > 0$.

We shall also define some smoother truncations. Suppose $\varphi \in \mathcal{A}$, where $\mathcal{A}$
consists of smooth functions $\varphi \colon [0, \infty) \to [0,1]$ satisfying that $\varphi = 0$ on $[0,1/2]$, $\varphi = 1$ on $[1,\infty)$ and $\|\varphi'\|_{L^{\infty}} \le 10$.
Define the smoothly truncated singular integrals
$$
T_{\varepsilon}^{\varphi}(f,g)(x) = \iint K_\vare ^\varphi (x,y,z) f(y)g(z) \,dy\,dz, \qquad \varepsilon > 0,
$$
where
$$
K_\vare^\varphi(x,y,z)= K(x,y,z) \varphi\Big( \frac{|x-y|+|x-z|}{\varepsilon} \Big).
$$
The point is that $T^{\varphi}_{\varepsilon}$, $\varepsilon > 0$, are operators with standard bilinear $n$-dimensional kernels (with the kernel
bounds being independent of $\varepsilon$). Moreover, we have
$$
|T_{\varepsilon}(f,g)(x)  - T_{\varepsilon}^{\varphi}(f,g)(x)| \lesssim \mathcal{M}(f,g)(x) := \sup_{r > 0} \bla |f| \bra_{B(x,r)} \bla |g| \bra_{B(x,r)},
$$
where $\bla f \bra_A := \frac{1}{|A|} \int_A f$.
If $0 < \vare_1<\vare_2$ we denote by $T^\varphi_{\vare_1, \vare_2}$ the operator
\begin{equation*}
\begin{split}
T^\varphi_{\vare_1, \vare_2}(f,g)(x)&=T_{\varepsilon_1}^{\varphi}(f,g)(x)-T_{\varepsilon_2}^{\varphi}(f,g)(x) \\
&=\iint K_{\vare_1, \vare_2} ^\varphi (x,y,z) f(y)g(z) \,dy\,dz,
\end{split}
\end{equation*}
where $K_{\vare_1, \vare_2} ^\varphi=K_{\vare_1} ^\varphi-K_{\vare_2} ^\varphi$.

The notation $T^{1*}$ and $T^{2*}$ stand for the adjoints of a bilinear operator $T$, i.e.
$$
\langle T(f, g), h\rangle = \langle T^{1*}(h, g), f\rangle = \langle T^{2*}(f, h),g\rangle.
$$

We can now state our main theorem. For the exact definitions of the various objects and notions (random dyadic grids, bilinear cancellative shifts, bilinear paraproducts, weak boundedness,
$T_{\delta}(1,1)$, sparse collections etc.) see the following two sections.
\begin{thm}\label{thm:main}
Let $K$ be a bilinear Calder\'on--Zygmund kernel so that $\|K\|_{\operatorname{CZ}_{\alpha}}  < \infty$,
and let $(T_{\varepsilon})_{\varepsilon > 0}$ be the corresponding bilinear singular integral.
Assume that
$$
\sup_{\delta >0} [\|T_{\delta}\|_{\operatorname{WBP}} + \|T_{\delta}(1,1)\|_{\operatorname{BMO}} + \|T^{1*}_{\delta}(1,1)\|_{\operatorname{BMO}} + \|T^{2*}_{\delta}(1,1)\|_{\operatorname{BMO}}] < \infty.
$$
Let also $\varphi \in \mathcal{A}$. Then there is a constant $C = C(n,\alpha) < \infty$ so that for all $\vare>0$ and all compactly supported and bounded functions $f, g$ and $h$ it holds that
\begin{align*}
\bla T_{\varepsilon}^{\varphi}(f,g), h \bra =& C(\|K\|_{\operatorname{CZ}_{\alpha}} + \sup_{\delta>0}\|T_{\delta}\|_{\operatorname{WBP}})   
E_{\omega} \sum_{k=0}^\infty \sum_{i=0}^{k}2^{-\alpha k/2} \bla U^{i,k}_{\varepsilon, \varphi, \omega} (f,g),h\bra \\
&+ C(\|K\|_{\operatorname{CZ}_{\alpha}}+ \sup_{\delta>0}\|T_{\delta}(1,1)\|_{\operatorname{BMO}}) E_{\omega} \bla \Pi_{\alpha_0(\varepsilon, \varphi, \omega)}(f,g), h \bra \\
&+ C(\|K\|_{\operatorname{CZ}_{\alpha}}+ \sup_{\delta>0}\|T^{1*}_{\delta}(1,1)\|_{\operatorname{BMO}})  E_{\omega} \bla \Pi^{1*}_{\alpha_1(\varepsilon, \varphi, \omega)}(f,g), h \bra \\
&+ C(\|K\|_{\operatorname{CZ}_{\alpha}}+ \sup_{\delta>0}\|T^{2*}_{\delta}(1,1)\|_{\operatorname{BMO}}) E_{\omega} \bla \Pi^{2*}_{\alpha_2(\varepsilon, \varphi, \omega)}(f,g), h \bra,
\end{align*}
where each $U^{i,k}_{\varepsilon, \varphi, \omega}$
is a sum of  cancellative bilinear shifts $S^{i,i,k}_{\varepsilon, \varphi, \omega}$, $S^{i,i+1,k}_{\varepsilon, \varphi, \omega}$  and adjoints of such operators,
and $\Pi_{\alpha}$ stands for a bilinear paraproduct with $\alpha$ as in \eqref{eq:BMO2}.
For a fixed $\omega$ the operators above are defined using the dyadic lattice $\calD_\omega$.
\end{thm}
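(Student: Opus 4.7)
The plan is to adapt the linear dyadic--probabilistic strategy of \cite{Hy,Hy2} to the bilinear setting, taking care to produce only cancellative bilinear shifts and simple paraproducts.

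First I would introduce three independent random shifts $\omega=(\omega_1,\omega_2,\omega_3)$ with associated dyadic lattices $\calD_{\omega_j}$, one per argument $f$, $g$, $h$. Since $\bla T^{\varphi}_{\varepsilon}(f,g),h\bra$ does not depend on $\omega$, it equals its $\omega$-expectation. Haar-expanding each input in its own lattice rewrites the form as a triple sum over $(I_1,I_2,I_3)\in\calD_{\omega_1}\times\calD_{\omega_2}\times\calD_{\omega_3}$. Applying the Nazarov--Treil--Volberg independence trick to the smallest of the three cubes discards, in expectation, all configurations in which this smallest cube is bad with respect to the others; this replaces the kernel H\"older exponent $\alpha$ by $\alpha/2$ in every subsequent decay estimate, producing the factor $2^{-\alpha k/2}$ that appears in the theorem.

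The remaining good configurations are parametrised by $k=\log_2(\ell(I_{\max})/\ell(I_{\min}))$ and a second index $i$ encoding the relative size of the middle cube. I would then perform a separated/nested dichotomy. In the \emph{separated} case the kernel H\"older estimate controls the matrix element $\bla T^{\varphi}_{\varepsilon}(h_{I_1},h_{I_2}),h_{I_3}\bra$ directly; collecting the good small cubes lying in a fixed large cube reassembles the contribution into a cancellative bilinear shift of type $S^{i,i,k}_{\varepsilon,\varphi,\omega}$ or $S^{i,i+1,k}_{\varepsilon,\varphi,\omega}$ (or one of their adjoints), with norms summable against $2^{-\alpha k/2}$. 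The near-diagonal case in which the three cubes have comparable size is handled via the weak boundedness hypothesis and absorbed into the shifts. In the \emph{nested} case one of the three Haar projections is replaced by an averaging projection, so the corresponding Haar function becomes a constant $1_{I_{\max}}$ and the residual factor collects into $T^{\varphi}_{\varepsilon}(1,1)$ (respectively $T^{1*}_{\varepsilon}(1,1)$ or $T^{2*}_{\varepsilon}(1,1)$) paired against the remaining Haar coefficients. This sum reorganises into one of the paraproducts $\Pi_{\alpha_0}$, $\Pi^{1*}_{\alpha_1}$ or $\Pi^{2*}_{\alpha_2}$, whose symbols lie in $\BMO$ precisely by the corresponding hypothesis of the theorem.

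The main obstacle, and the reason no bilinear analogue has previously appeared, lies in the combinatorial bookkeeping of this dichotomy. The bilinear triple Haar sum admits many a priori reasonable decompositions, and most of them produce non-cancellative shifts or paraproducts whose sparse bounds are unclear. The hard part will be to route every summand into exactly one of the permitted cancellative shifts $S^{i,i,k}$, $S^{i,i+1,k}$ (or an adjoint) or one of the three paraproducts---choosing precisely which Haar projection to convert to an averaging projection, in which order, and with which symmetrisation over the roles of $f$, $g$, $h$---so that the resulting model operators remain simple enough to admit the sparse bound established in the companion sections. The kernel and $T1$-type estimates themselves should follow routinely once this routing is fixed.
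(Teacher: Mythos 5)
Your high-level plan (Haar expand, inject goodness, separated/nested dichotomy, route diagonals through WBP and nested pieces into paraproducts) is the right skeleton, but two of your concrete choices diverge from what actually makes the proof close, and one of them points at a genuine gap.

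First, the paper uses a \emph{single} random lattice $\calD_\omega$ for all of $f,g,h$, not three independent shifts $\omega_1,\omega_2,\omega_3$. The target operators $S^{i,j,k}$ and $\Pi_\alpha$ live in one dyadic grid, so with three lattices you would have to recollapse the triple Haar sum into a single grid at the end, which is extra work with no payoff here. The way the paper injects goodness in the single-lattice setting is also worth noting: after ordering by side length it rewrites $\sum_{\ell(I),\ell(J)\ge\ell(K)}\bla T(\Delta_I f,\Delta_J g),\Delta_K h\bra = \sum_K \bla T(E^\omega_{\ell(K)/2}f,E^\omega_{\ell(K)/2}g),\Delta_K h\bra$ and then exploits that $1_{\good}(K+\omega)$ is independent of $E^\omega_{\ell(K)/2}f$, $E^\omega_{\ell(K)/2}g$, $\Delta_{K+\omega}h$. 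This is a different, simpler mechanism than the classical NTV symmetrisation you are alluding to.

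Second, and this is the real gap, you have deferred precisely the point that makes the bilinear paraproduct come out \emph{simple}. The nested regime in $\Sigma^1$ splits by whether $\ell(I)\le\ell(J)$ or $\ell(J)<\ell(I)$, giving two sums $\sigma^1$, $\sigma^2$, and each produces a half-paraproduct of the form $\sum_{K\subset J}\bla T(1,1),\Delta_K h\bra\,\bla\Delta_{J^{(1)}}f\bra_J\bla g\bra_J$ (and the analogous term with the roles of $f,g$ swapped). Neither of these by itself is a bilinear paraproduct $\sum_K\alpha_K\bla f\bra_K\bla g\bra_K h_K$. The key step is to add the two half-paraproducts and use the telescoping identity $\bla\Delta_{J^{(1)}}f\bra_J\bla g\bra_J+\bla f\bra_{J^{(1)}}\bla\Delta_{J^{(1)}}g\bra_J=\bla f\bra_J\bla g\bra_J-\bla f\bra_{J^{(1)}}\bla g\bra_{J^{(1)}}$, which telescopes the $J$-sum down to $K$ and yields $\sum_K\bla T(1,1),\Delta_K h\bra\bla f\bra_K\bla g\bra_K$. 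Without noticing this cancellation, the ``routing'' you describe as the hard part does not close, and you are left with model operators that are not of the permitted form. You also do not address the fact that $T^\varphi_\varepsilon$ is not a priori bounded, which in the paper is handled by working with $T^\varphi_{\varepsilon_1,\varepsilon_2}$ throughout and letting $\varepsilon_2\to\infty$ at the very end; this limit requires a small but nontrivial argument using the uniform $\BMO$ bounds.
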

The following corollary follows from the sparse domination of shifts and paraproducts (see Section \ref{sec:SFPARSE}), and the trivial sparse bound for $\mathcal{M}$.
\begin{cor}\label{cor:main}
There exist dyadic grids $\mathcal{D}_i$, $i = 1, \ldots, 3^n$, with the following property. Let $\eta \in (0,1)$.
For compactly supported and bounded functions $f, g$ and $h$
there is a dyadic grid $\calD_i$ and an $\eta$-sparse collection $\mathcal{S}=\mathcal{S}(f,g,h, \eta) \subset \calD_i$  so that the following holds.

Let $K$ be any standard bilinear Calder\'on--Zygmund kernel and $(T_{\varepsilon})_{\varepsilon > 0}$ be the corresponding bilinear singular integral. Then we have
$$
\sup_{\varepsilon > 0} |\bla T_{\varepsilon} (f,g), h \bra|
\le C_{T,K} \Lambda_{\mathcal{S}}(f, g, h),
$$
where
\begin{align*}
C_{T,K} := C(\|K\|_{\operatorname{CZ}_{\alpha}}+ \sup_{\varepsilon>0}\|T_{\varepsilon}(1,1)\|_{\operatorname{BMO}} &+  \sup_{\varepsilon>0}\|T^{1*}_{\varepsilon}(1,1)\|_{\operatorname{BMO}} \\
&+  \sup_{\varepsilon>0}\|T^{2*}_{\varepsilon}(1,1)\|_{\operatorname{BMO}} + \sup_{\varepsilon>0}\|T_{\varepsilon}\|_{\operatorname{WBP}})
\end{align*}
for some $C = C(n,\alpha) < \infty$ and
$$
\Lambda_{\mathcal{S}}(f, g, h) := \sum_{Q\in\mathcal{S}}|Q| \bla |f|\bra_Q \bla |g|\bra_Q \bla |h|\bra_Q.
$$
\end{cor}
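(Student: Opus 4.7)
\emph{Proof proposal.} The plan is to combine Theorem \ref{thm:main} with the sparse bounds for model operators of Section \ref{sec:SFPARSE} and the ``$3^n$ adjacent dyadic systems'' trick. First, I split off the smooth truncation: the pointwise estimate $|T_\varepsilon(f,g) - T^\varphi_\varepsilon(f,g)| \lesssim \mathcal{M}(f,g)$ recorded in the introduction reduces the problem to bounding $T^\varphi_\varepsilon$, since the bilinear maximal function $\mathcal{M}$ admits a trivial pointwise sparse bound on one of the $3^n$ fixed grids with a collection depending only on $f, g$. I then insert the representation from Theorem \ref{thm:main}, pull the absolute values inside the expectation $E_\omega$ and the $k$-sum, and reduce matters to bounding, for each fixed $\omega$, the sum of shift terms $\bla U^{i,k}_{\varepsilon,\varphi,\omega}(f,g),h\bra$ and paraproduct terms, all living in the random grid $\calD_\omega$.

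For each fixed $\omega$ I would next invoke the sparse bounds of Section \ref{sec:SFPARSE} for cancellative bilinear shifts and bilinear paraproducts. The crucial feature I expect these bounds to possess is ``universality'' in the spirit of \cite{CPO}: the sparse collection should be built from a stopping-time tree of the data $(f,g,h,\eta)$ in $\calD_\omega$ and must not depend on the particular model operator, its complexity $(i,k)$, or on $\varepsilon$. Granting this, any polynomial complexity growth is absorbed by the decay factor $2^{-\alpha k/2}$, the geometric series sums, and one obtains a uniform bound $C_{T,K}\,\Lambda_{\mathcal{S}_\omega}(f,g,h)$ with $\mathcal{S}_\omega \subset \calD_\omega$. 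Since the left-hand side of the representation is $\omega$-independent I then fix any single realization $\omega_0$, apply the $3^n$-grid trick to replace each $Q \in \mathcal{S}_{\omega_0}$ by a comparable-size ancestor in one of the deterministic grids $\calD_i$, and retain the sparse subfamily with the largest contribution (absorbing the factor $3^n$ into the constant). The resulting $\mathcal{S}\subset\calD_i$ is the required collection, and the supremum over $\varepsilon > 0$ is automatic because $\mathcal{S}$ was built without reference to $\varepsilon$.

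The hard part is precisely the universality claim above. Producing a \emph{single} sparse collection that simultaneously dominates every shift of arbitrary complexity $(i,k)$, all three paraproducts, and every truncation level $\varepsilon$ requires setting up the bilinear analogue of the Culiuc--Di Plinio--Ou stopping-time construction so that the stopping-time principal cubes control both the averaged contributions (from the shifts) and the $\BMO$-type contributions (from the paraproducts), and so that the cancellative structure of the shifts allows telescoping beyond the stopping cubes without loss; this is the content of Section \ref{sec:SFPARSE}.
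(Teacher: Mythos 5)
Your overall strategy is the paper's: reduce $T_\varepsilon$ to $T_\varepsilon^\varphi$ via the pointwise bound $|T_\varepsilon - T_\varepsilon^\varphi|\lesssim \mathcal{M}$ plus the trivial sparse bound for $\mathcal{M}$; insert Theorem \ref{thm:main}; and then combine the universal sparse bound for model operators in a fixed grid (Proposition \ref{prop:SparseInFixedGridSF}, which gives a sparse family depending only on $(f,g,h,\eta)$ and the grid, not on the operator, its complexity, or $\varepsilon$) with the $3^n$-grid universality (Corollary \ref{cor:UniversalSparse}). The complexity growth of the shift forms is indeed absorbed by $2^{-\alpha k/2}$, as you say, and this is exactly how Section \ref{sec:SFPARSE} is set up (assumption C with parameter $\rho$ and the linear-in-$\rho$ loss in \eqref{sparseforshifts}).

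One step in your write-up is, however, not literally correct: you cannot ``fix any single realization $\omega_0$.'' Theorem \ref{thm:main} is an identity between $\bla T_\varepsilon^\varphi(f,g),h\bra$ and an \emph{expectation} $E_\omega[\cdots]$; it is not a pointwise-in-$\omega$ identity, so specializing to one $\omega_0$ would change the left-hand side. The correct order of operations is the one that your earlier remarks already suggest: for each fixed $\omega$, Proposition \ref{prop:SparseInFixedGridSF} furnishes a single $\eta$-sparse $\mathcal{S}_\omega\subset\calD_\omega$ dominating all the shift/paraproduct forms for that grid simultaneously; then Corollary \ref{cor:UniversalSparse} provides one $\eta$-sparse $\mathcal{U}\subset\calD_i$ (built only from $f,g,h$) with $\Lambda_{\mathcal{S}_\omega}(f,g,h)\lesssim\Lambda_{\mathcal{U}}(f,g,h)$ \emph{uniformly in $\omega$}. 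Taking the expectation over $\omega$ then gives the conclusion with $\mathcal{S}=\mathcal{U}$. With that correction your argument is the paper's.
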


\subsection*{Additional notation}
We write $A \lesssim B$, if there is an absolute constant $C>0$ (depending only on some fixed constants like $n$ and $\alpha$ etc.) so that $A \leq C B$.
Moreover, $A \lesssim_{\tau} B$ means that the constant $C$ can also depend on some relevant given parameter $\tau > 0$.
We may also write $A \sim B$ if $B \lesssim A \lesssim B$.

We then define some notation related to cubes. If $Q$ and $R$ are two cubes we set:
\begin{itemize} 
\item $\ell(Q)$ is the side-length of $Q$;
\item If $a>0$, we denote by $aQ$ the cube that is concentric with $Q$ and has sidelength $a\ell(Q)$; 
\item $d(Q,R) = \dist(Q,R)$ denotes the distance between the cubes $Q$ and $R$;
\item $\text{ch}(Q)$ denotes the dyadic children of $Q$;
\item If $Q$ is in a dyadic grid, then $Q^{(k)}$ denotes the unique dyadic cube $S$ in the same grid so that $Q \subset S$ and $\ell(S) = 2^k\ell(Q)$;
\item If $\calD$ is a dyadic grid, then $\calD_k = \{Q \in \calD\colon\, \ell(Q) = 2^{-k}\}$;
\end{itemize}
The notation $\langle f, g\rangle$ stands for the pairing $\int fg$.

The following maximal functions are also used:
\begin{align*}
M_{\calD} f(x) &= \sup_{Q \in \calD} 1_Q(x) \langle |f| \rangle_Q \qquad (\calD \textup{ is a dyadic grid}); \\
M f(x) &= \sup_{r>0} \, \langle |f| \rangle_{B(x,r)}. 
\end{align*}
Here $B(x,r) = \{y\colon\, |x-y| < r\}$.
The bilinear variants are defined in the natural way, e.g.
$$
\mathcal{M}(f,g)(x) = \sup_{r>0} \, \langle |f| \rangle_{B(x,r)} \langle |g| \rangle_{B(x,r)}.
$$

\subsection*{Acknowledgements}
K.L. is supported by Juan de la Cierva - Formaci\'on 2015 FJCI-2015-24547, the Basque Government through the BERC 2014-2017 program and
by Spanish Ministry of Economy and Competitiveness MINECO: BCAM Severo Ochoa excellence accreditation SEV-2013-0323.
H.M. is supported by the Academy of Finland through the grants 294840 and 306901, and is a member of the Finnish Centre of Excellence in Analysis and Dynamics Research.
Y.O. was in residence at the Mathematical Sciences Research Institute in Berkeley, California, during the Spring 2017 semester when this work was carried out, and was
supported by the National Science Foundation under Grant No. DMS-1440140.
E.V. is a member of the Finnish Centre of Excellence in Analysis and Dynamics Research.

\section{Basic definitions}
\subsection{Random dyadic grids, martingales, Haar functions}
Let $\omega = (\omega^i)_{i \in \Z}$, where $\omega^i \in \{0,1\}^n$. Let $\mathcal{D}_0$ be the standard dyadic grid on $\R^n$.
We define the new dyadic grid 
$$
\mathcal{D}_{\omega} = \Big\{I + \sum_{i:\, 2^{-i} < \ell(I)} 2^{-i}\omega^i: \, I \in \mathcal{D}_0\Big\} = \{I + \omega: \, I \in \mathcal{D}_0\},
$$
where we simply have defined $I + \omega := I + \sum_{i:\, 2^{-i} < \ell(I)} 2^{-i}\omega^i$. There is a natural product probability measure $\mathbb{P}_{\omega} = \mathbb{P}$ on $(\{0,1\}^n)^{\Z}$ -- this gives us the notion
of random dyadic grids $\omega \mapsto \mathcal{D}_{\omega}$.

A cube $I \in \mathcal{D} = \mathcal{D}_{\omega}$ is called bad if there exists such a cube $J \in \mathcal{D}$ that $\ell(J) \ge 2^r \ell(I)$ and 
$$
d(I, \partial J) \le \ell(I)^{\gamma}\ell(J)^{1-\gamma}.
$$
Here $\gamma = \alpha/(2[2n + \alpha])$, where $\alpha > 0$ appears in the kernel estimates. Otherwise a cube is called good.
We note that
$\pi_{\textrm{good}} := \mathbb{P}_{\omega}(I + \omega \textrm{ is good})$ is independent of the choice of $I \in \mathcal{D}_0$. The appearing parameter $r$ is a large enough fixed constant so that $\pi_{\textrm{good}} > 0$.
Moreover, for a fixed $I \in \mathcal{D}_0$
the set $I + \omega$ depends on $\omega^i$ with $2^{-i} < \ell(I)$, while the goodness of $I + \omega$ depends on $\omega^i$ with $2^{-i} \ge \ell(I)$. These notions are thus independent by the product probability structure.

For $I \in \calD$ and a locally integrable function $f$ we define the martingale difference
$$
\Delta_I f = \sum_{I' \in \textup{ch}(I)} \big[ \bla f \bra_{I'} -  \bla f \bra_{I} \big] 1_{I'}.
$$
We have the standard estimate
$$
\Big\| \Big( \sum_{I \in \calD}  |\Delta_I f|^2  \Big)^{1/2} \Big\|_{L^p} \sim \|f\|_{L^p}, \qquad 1 < p < \infty.
$$
Writing $I = I_1 \times \cdots \times I_n$ we can define the Haar function $h_I^{\eta}$, $\eta = (\eta_1, \ldots, \eta_n) \in \{0,1\}^n$, by setting
\begin{displaymath}
h_I^{\eta} = h_{I_1}^{\eta_1} \otimes \cdots \otimes h_{I_n}^{\eta_n}, 
\end{displaymath}
where $h_{I_i}^0 = |I_i|^{-1/2}1_{I_i}$ and $h_{I_i}^1 = |I_i|^{-1/2}(1_{I_{i, l}} - 1_{I_{i, r}})$ for every $i = 1, \ldots, n$. Here $I_{i,l}$ and $I_{i,r}$ are the left and right
halves of the interval $I_i$ respectively. If $\eta \ne 0$ the Haar function is cancellative: $\int h_I^{\eta} = 0$. We have that
$$
\Delta_I f = \sum_{\eta \in \{0,1\}^n \setminus \{0\}} \bla f, h_I^{\eta} \bra h_I^{\eta},
$$
but for convenience we understand that the $\eta$ summation is suppressed and simply write
$$
\Delta_I f = \bla f, h_I \bra h_I.
$$
In this paper $h_I$ always denotes a cancellative Haar function (i.e. $h_I = h_I^{\eta}$ for some $\eta \ne 0$). A non-cancellative Haar function is explicitly denoted by
$h_I^0$.

\subsection{Testing conditions: BMO and WBP}
Let $K$ be a standard bilinear Calder\'on-Zygmund kernel, and let $\{T_\varepsilon\}_{\varepsilon>0}$ be the related family of truncated operators. We recall a usual interpretation of  $T_{\varepsilon}(1,1)$ and what is means that it belongs $\BMO$.

Fix some $\varepsilon>0$. Let $R\subset \R^n$ be a closed cube and let $\phi$ be an $L^\infty$ function supported in $R$ such that $\int \phi=0$. 
Let $C=C(\vare)\ge 3$ be any large constant so that $2^{-1}(C-1)\ell(R)>\vare$, whence $|x-y| > \vare$ for all $x \in R$ and $y \not \in CR$. We define
\begin{equation}\label{eq:DefPair}
\begin{split}
\bla T_\varepsilon (1,1), \phi \bra
&:= \bla T_\varepsilon (1_{CR}, 1_{CR}), \phi \bra \\
&+\iiint  \big(K(x,y,z)-K(c_R,y,z)\big)1_{(CR \times CR)^c}(y,z)\phi(x)\, dy\, dz \,dx.
\end{split}
\end{equation}
Applying the $x$-H\"older estimate of the kernel it is seen that the integral is absolutely convergent.
It is straightforward to check that the right hand side of \eqref{eq:DefPair} is independent of the cube $R$ and the constant $C$ as long as
$\phi$ is supported in $R$ and $2^{-1}(C-1)\ell(R)>\vare$, $C \ge 3$.

If $\varphi \in \mathcal{A}$ and   $\phi$ is as above, we define 
\begin{equation}\label{eq:DefPairSmooth}
\begin{split}
\bla T_\varepsilon^\varphi(1,1), \phi \bra
&:= \bla T_\varepsilon ^\varphi (1_{CR}, 1_{CR}), \phi \bra \\
&+\iiint  \big(K^\varphi_\vare(x,y,z)-K^\varphi_\vare(c_R,y,z)\big)1_{(CR \times CR)^c}(y,z)\phi(x)\, dy\, dz \,dx
\end{split}
\end{equation}
for any closed cube $R$ containing the support of $\phi$ and any $C\ge3$, say.

\begin{defn}
Let $\vare>0$. Suppose $K$ is a standard bilinear Calder\'on-Zygmund kernel, and let $T_\varepsilon$ be the related truncated operator. We say that $T_\vare(1,1)$ is in $\BMO$, and write $T_\vare (1,1) \in \BMO$, if there exists a constant $C$ so  that
for all closed cubes $R$ and all functions $\phi$ supported in $R$ such that $\| \phi \|_{L^\infty} \le 1$ and $\int \phi =0$ there holds
\begin{equation}\label{eq:DefBMO}
 \frac{\big| \bla T_\vare(1,1), \phi \bra \big|}{|R|}    \le C.
\end{equation}
We denote the smallest constant $C$ in \eqref{eq:DefBMO} by $\| T_\vare(1,1)\|_{\BMO}$.

If $\varphi \in \mathcal{A}$, the corresponding definition for the smoothly truncated operator $T^\varphi_\vare$ is obtained just by replacing $T_\vare$ by $T^\varphi_\vare$. 
\end{defn}

In the representation theorem we will assume that $T_\vare(1,1) \in \BMO$. The following simple lemma shows that the conditions $T_\vare(1,1) \in \BMO$ and $T_\vare^\varphi(1,1) \in \BMO$ are equivalent.

\begin{lem}\label{lem:BMOTransfer}
Suppose $K$ is a standard bilinear Calder\'on-Zygmund kernel and let $\vare>0$ and $\varphi \in \mathcal{A}$. Then 
$$
\| T_\vare^\varphi (1,1)\|_{\BMO} \le C \big(\| K \|_{\operatorname{CZ}_\alpha}+ \| T_\vare(1,1)\|_{\BMO}\big) \quad 
$$
and
$$
\| T_\vare (1,1)\|_{\operatorname{BMO}} \le C \big(\| K \|_{\operatorname{CZ}_\alpha}+ \| T^\varphi_\vare(1,1)\|_{\operatorname{BMO}}\big).
$$

\end{lem}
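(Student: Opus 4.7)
The plan is to show that the two ways of defining the BMO pairings, \eqref{eq:DefPair} and \eqref{eq:DefPairSmooth}, differ by a quantity that is pointwise controlled by the bilinear maximal function $\mathcal{M}$. Fix a cube $R$, a function $\phi$ supported in $R$ with $\|\phi\|_{L^\infty}\le 1$ and $\int\phi=0$, and choose $C=C(\varepsilon)\ge 3$ so that $2^{-1}(C-1)\ell(R)>\varepsilon$. I will compare \eqref{eq:DefPair} and \eqref{eq:DefPairSmooth} with this same cube $R$ and this same constant $C$.

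First I would verify that the off-diagonal kernel integrals in \eqref{eq:DefPair} and \eqref{eq:DefPairSmooth} are \emph{identical}. Indeed, if $x\in R$ and $(y,z)\in(CR\times CR)^c$, then at least one of $y,z$ lies outside $CR$, and a direct estimate using the $\ell^\infty$ metric gives $\max(|x-y|,|x-z|)\ge (C-1)\ell(R)/2>\varepsilon$. Consequently $|x-y|+|x-z|>\varepsilon$, so $\varphi((|x-y|+|x-z|)/\varepsilon)=1$ and $K_\varepsilon^\varphi(x,y,z)=K(x,y,z)$ on this set; the same identity then holds with $c_R$ in place of $x$ since $c_R\in R$. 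Thus the integrals over $(CR\times CR)^c$ in \eqref{eq:DefPair} and \eqref{eq:DefPairSmooth} coincide, and we are left with
$$
\bla T_\varepsilon(1,1),\phi\bra-\bla T_\varepsilon^\varphi(1,1),\phi\bra
=\bla (T_\varepsilon-T_\varepsilon^\varphi)(1_{CR},1_{CR}),\phi\bra.
$$

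Next I would invoke the pointwise bound already recorded in the introduction, namely
$$
|T_\varepsilon(f,g)(x)-T_\varepsilon^\varphi(f,g)(x)|\lesssim \|K\|_{\operatorname{CZ}_\alpha}\,\mathcal{M}(f,g)(x),
$$
which comes from the fact that the difference of kernels is supported in $\{|x-y|+|x-z|\sim\varepsilon\}$ where $|K|\lesssim\|K\|_{\operatorname{CZ}_\alpha}\varepsilon^{-2n}$. Specializing to $f=g=1_{CR}$ gives $\mathcal{M}(1_{CR},1_{CR})\le 1$ pointwise, so that
$$
\bigl|\bla (T_\varepsilon-T_\varepsilon^\varphi)(1_{CR},1_{CR}),\phi\bra\bigr|
\le \|\phi\|_{L^\infty}\int_R|T_\varepsilon-T_\varepsilon^\varphi|(1_{CR},1_{CR})\lesssim \|K\|_{\operatorname{CZ}_\alpha}|R|.
$$
Dividing by $|R|$ and taking the supremum over admissible $(R,\phi)$ yields
$$
\bigl|\|T_\varepsilon(1,1)\|_{\BMO}-\|T_\varepsilon^\varphi(1,1)\|_{\BMO}\bigr|\lesssim \|K\|_{\operatorname{CZ}_\alpha},
$$
which is exactly both inequalities claimed in the lemma.

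The only genuinely delicate point is the first step: one must check that the constant $C$ used in \eqref{eq:DefPair} is compatible with the one used in \eqref{eq:DefPairSmooth} and that the geometric computation above really makes the two off-diagonal integrals equal rather than merely close. Once that bookkeeping is handled, everything else is a one-line application of the $\mathcal{M}$-bound, so I do not expect any serious obstacle.
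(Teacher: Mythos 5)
Your proposal is correct and follows essentially the same route as the paper's proof: both observe that the off-diagonal integrals in \eqref{eq:DefPair} and \eqref{eq:DefPairSmooth} cancel (because $K_\varepsilon^\varphi = K$ there), reducing the difference to $\bla (T_\varepsilon - T_\varepsilon^\varphi)(1_{CR}, 1_{CR}), \phi\bra$, which is then controlled by the pointwise bound $|T_\varepsilon - T_\varepsilon^\varphi| \lesssim \calM$ and $\calM(1_{CR},1_{CR}) \le 1$. The paper states this more tersely; you spell out the geometric bookkeeping (that $\max(|x-y|,|x-z|) > \varepsilon$ forces $\varphi = 1$) that justifies the cancellation, which is exactly what the paper's "using the definitions\dots one sees" compresses.
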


\begin{proof}
Fix a closed cube $R$ and a function $\phi$ supported in $R$ such that $\| \phi \|_{L^\infty} \le 1$ and $\int \phi=0$.
Then, using the definitions \eqref{eq:DefPair} and \eqref{eq:DefPairSmooth}, one sees that
\begin{equation*}
\begin{split}
\big| \bla T_\vare(1,1), \phi \bra- \bla T^\varphi_\vare (1,1), \phi \bra \big|
&=\big| \bla T_\vare(1_{CR},1_{CR}), \phi \bra- \bla T^\varphi_\vare (1_{CR},1_{CR}), \phi \bra \big|\\
&\lesssim \bla \calM(1_{CR},1_{CR}), |\phi| \bra 
\le \int |\phi| dx \le |R|.
\end{split}
\end{equation*}

The claim follows from this estimate.
\end{proof}

For the convenience of the reader we state the following lemma on the equivalence of some BMO type conditions -- 
although $T_\vare^\varphi(1,1)$ is not stricly speaking a function, the lemma nevertheless follows from John--Nirenberg by standard arguments. Therefore, the
paraproducts we will encounter can be made to obey the normalisation in \eqref{eq:BMO2}.

\begin{lem}\label{lem:JN}
Suppose $K$ is a standard bilinear Calder\'on-Zygmund kernel and let $\vare>0$ and $\varphi \in \mathcal{A}$. 
Suppose $\calD$ is a dyadic lattice.  Then
$$
\sup_{R \in \calD}\frac{1}{|R|} \sum_{\substack{Q \in \calD \\ Q \subset R}} \big|\bla T_\vare^\varphi(1,1), h_Q \bra \big|^2
\le C \| T_\vare^\varphi(1,1)\|_{\BMO}^2
$$
for some absolute constant $C$.
\end{lem}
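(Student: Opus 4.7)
The plan is to reduce the claim, by $\ell^2$-duality applied to the Haar coefficients, to a single $L^2$-type pairing bound for $T_\vare^\varphi(1,1)$, and then to derive that bound from the $\BMO$-hypothesis via the classical $(1,\infty)$-atomic decomposition of dyadic $H^1$. First, by monotone convergence, one may restrict the sum to a finite subcollection $F \subset \{Q \in \calD \colon Q \subset R\}$. Parseval for the orthonormal Haar system, combined with $\ell^2(F)$-duality, then yields
$$
\Big(\sum_{Q \in F} \big|\bla T_\vare^\varphi(1,1), h_Q \bra\big|^2\Big)^{1/2}
= \sup_{\phi} \bla T_\vare^\varphi(1,1), \phi \bra,
$$
where the supremum runs over $\phi = \sum_{Q \in F} a_Q h_Q$ with $\|\phi\|_{L^2} \le 1$. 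Every such $\phi$ is bounded, mean zero and supported in $R$, so the pairing is defined in the sense of \eqref{eq:DefPairSmooth}. Thus the lemma reduces to the single $L^2$-testing estimate
$$
\big|\bla T_\vare^\varphi(1,1), \phi \bra\big| \lesssim \|T_\vare^\varphi(1,1)\|_\BMO\, |R|^{1/2}\, \|\phi\|_{L^2}
$$
for every mean-zero $\phi \in L^\infty$ supported in $R$.

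This remaining estimate is the functional-valued analog of the equivalence of the $L^\infty$- and $L^2$-oscillation BMO norms, i.e., of John--Nirenberg. The key step is to invoke the classical $(1,\infty)$-atomic decomposition for dyadic martingale $H^1$: any such $\phi$ admits a representation $\phi = \sum_k \lambda_k \alpha_k$ with each $\alpha_k$ supported on a dyadic cube $Q_k \subset R$, having mean zero, satisfying $\|\alpha_k\|_{L^\infty} \le |Q_k|^{-1}$, and with $\sum_k |\lambda_k| \lesssim \|S_\calD \phi\|_{L^1}$, where $S_\calD\phi = \big(\sum_{I \in \calD} |\Delta_I \phi|^2\big)^{1/2}$ is the dyadic square function. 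Since $\phi$ is mean zero and supported in $R$, $\Delta_I \phi$ vanishes for every $I \in \calD$ strictly containing $R$, so $S_\calD \phi$ is supported in $R$; Cauchy--Schwarz and Parseval then give $\|S_\calD \phi\|_{L^1} \le |R|^{1/2}\|S_\calD\phi\|_{L^2} = |R|^{1/2}\|\phi\|_{L^2}$.

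With the decomposition in hand the proof concludes in a line: each atom $\alpha_k$ is a genuine bounded mean-zero function supported on a cube, so the $\BMO$-hypothesis applies directly and produces $|\bla T_\vare^\varphi(1,1), \alpha_k \bra| \le \|T_\vare^\varphi(1,1)\|_\BMO \|\alpha_k\|_{L^\infty} |Q_k| \le \|T_\vare^\varphi(1,1)\|_\BMO$. Summing against the $\lambda_k$ and combining with the estimate $\sum_k |\lambda_k| \lesssim |R|^{1/2}\|\phi\|_{L^2}$ yields the required $L^2$-pairing bound. The only non-elementary input is the $(1,\infty)$-atomic decomposition itself, which is the classical route through John--Nirenberg (equivalently, dyadic $H^1$--$\BMO$ duality); this is precisely what allows the $L^\infty$-testing hypothesis to be upgraded to an $L^2$-testing one even though $T_\vare^\varphi(1,1)$ is only a functional rather than a bona fide function.
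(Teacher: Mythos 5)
Your proposal is correct, and it supplies a complete argument for a lemma that the paper leaves as a one-line remark (``follows from John--Nirenberg by standard arguments''). Your route---reduce by $\ell^2$-duality to an $L^2$-testing estimate, then upgrade the $L^\infty$-testing hypothesis via the $(1,\infty)$-atomic decomposition of dyadic $H^1$---is the $H^1$--$\BMO$ duality side of John--Nirenberg, and it is particularly well suited here because it never needs to identify $T^\varphi_\vare(1,1)$ as a genuine function: one only pairs against individual atoms, each of which is a legitimate test function for \eqref{eq:DefPairSmooth}. The other standard route, which the paper's phrasing also permits, is to observe that \eqref{eq:DefPairSmooth} shows the functional $\phi \mapsto \bla T^\varphi_\vare(1,1),\phi\bra$ is integration against a locally integrable function $b$ defined modulo additive constants (consistently across cubes $R$), that the hypothesis is precisely $\| b\|_{\BMO}\lesssim \|T^\varphi_\vare(1,1)\|_{\BMO}$ in the $L^1$-oscillation sense, and that the Carleson sum $\sum_{Q\subset R}|\bla b,h_Q\bra|^2$ is exactly $\inf_c\|b-c\|_{L^2(R)}^2$, so the lemma becomes the classical $L^1$-to-$L^2$ oscillation upgrade. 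The two arguments are equivalent in content. One minor point you could make explicit: the termwise application of the pairing to the atomic sum needs a word of justification, but this is harmless since after restricting to a finite $F$ the test function $\phi$ has a finite Haar expansion and the stopping-time atomic decomposition is itself a finite sum.
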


Next, we give the definition of weak boundedness property.
\begin{defn}
The weak boundedness property constant $\|T_{\varepsilon}\|_{\operatorname{WBP}}$ is the best constant $C$ so that the inequality
$$
|\bla T_{\varepsilon}(1_I, 1_I), 1_I\bra| \le C|I|
$$
holds for all cubes $I \subset \R^n$.
\end{defn}

\subsection{Sparse collections}
A collection $\mathcal{S}$ of cubes is said to be \emph{$\eta$-sparse} (or just sparse), $0<\eta<1$, if for any $Q\in\mathcal{S}$ there exists $E_Q\subset Q$ so that $|E_Q|>\eta|Q|$ and $\{E_Q:\,Q\in\mathcal{S}\}$ are pairwise disjoint.
The definition does not require the cubes to be part of some fixed dyadic grid. Although, it can be convenient to know that in Corollary \ref{cor:main} the sparse family $\mathcal{S}$ can always
be found inside one of the fixed dyadic grids $\calD_i$, $\#i \lesssim 1$.

\section{Bilinear shifts}
In this section all cubes are part of some fixed dyadic grid $\calD$. We will introduce certain cancellative shifts and paraproducts in this section.
We will also show their boundedness $L^p \times L^q \to L^r$ in the simple case $1 < p, q, r < \infty$ satisfying $1/p + 1/q = 1/r$.
The restriction $r > 1$ can be lifted after we have shown the sparse domination (see Section \ref{sec:SFPARSE}).
\subsection{Cancellative bilinear shifts}
Define for $i,j,k \ge 0$ the bilinear shift $(f,g) \mapsto S^{i,j,k}(f,g)$ by setting
$$
S^{i,j,k}(f,g) = \sum_Q A^{i,j,k}_Q(f,g),
$$
where
$$
A^{i,j,k}_Q(f,g) = \mathop{\mathop{\mathop{\sum_{I, J, K \subset Q}}_{\ell(I) = 2^{-i}\ell(Q)}}_{\ell(J) = 2^{-j} \ell(Q)}}_{\ell(K) = 2^{-k}\ell(Q)} \alpha_{I,J,K,Q} \bla f, \tilde h_I \bra \bla g, \tilde h_J \bra h_K
$$
and
$$
(\tilde h_I, \tilde h_J) \in \big\{ (h_I, h_J),(h_I^0, h_J)(h_I, h_J^0)\big\}.
$$
We also demand that
$$
| \alpha_{I,J,K,Q} | \le \frac{|I|^{1/2} |J|^{1/2} |K|^{1/2} } { |Q|^2}.
$$ 
Such a shift will be considered to be a \emph{cancellative bilinear shift}. 
Also the duals of these operators will be used in the representation.

Let $1 < p, q, r < \infty$ be such that $1/p + 1/q = 1/r$.
We show that
$$
\| S^{i,j,k}(f,g) \|_{L^r}  \lesssim \|f\|_{L^p} \|g\|_{L^q}
$$
with the constant independent of the shift in question, and only depending on $p,q,r$.
To do this, we may assume without loss of generality that for example $\tilde h_I=h_I$ for all $I$ (a general shift can be split into two shifts
where  $\tilde h_I=h_I$ for all $I$ in one of them and $\tilde h_J=h_J$ for all $J$ in the other).
Notice that we have the pointwise estimate
$$
|A^{i,j,k}_Q(f,g)| \le \bla |f| \bra_Q  \bla |g| \bra_Q 1_Q.
$$
Define also
$$
D_Q^i f = \mathop{\sum_{I \subset Q}}_{\ell(I) = 2^{-i}\ell(Q)} \bla f , h_I \bra h_I.
$$
Since $A^{i,j,k}_Q(f,g) = A^{i,j,k}_Q(D_Q^i f,g)$, we have 
$$
|A^{i,j,k}_Q(f,g)| \le M_{\calD} g \bla |D_Q^i f| \bra_Q  1_Q.
$$

Notice that
$$
\Big\| \Big( \sum_Q  |D_Q^i f|^2  \Big)^{1/2} \Big\|_{L^p} = \Big\| \Big( \sum_I  |\Delta_I f|^2  \Big)^{1/2} \Big\|_{L^p} \sim \|f\|_{L^p}, \qquad 1 < p < \infty.
$$
Let $1 < p, q, r < \infty$ be such that $1/p + 1/q = 1/r$. Using the above we see that
$$
\| S^{i,j,k}(f,g) \|_{L^r} \sim \Big\| \Big( \sum_Q  |D_Q^k(S^{i,j,k}(f,g)) |^2  \Big)^{1/2} \Big\|_{L^r} = \Big\| \Big( \sum_Q  |A^{i,j,k}_Q(f,g) |^2  \Big)^{1/2} \Big\|_{L^r}.
$$
Now, we have
\begin{align*}
\Big\| \Big( \sum_Q  |A^{i,j,k}_Q(f,g)) |^2  \Big)^{1/2} \Big\|_{L^r} &\le \Big\| M_{\calD} g \Big( \sum_Q  \bla |D_Q^i f| \bra_Q^2  1_Q  \Big)^{1/2} \Big\|_{L^r} \\
&\le \Big\|  \Big( \sum_Q  \bla |D_Q^i f| \bra_Q^2  1_Q  \Big)^{1/2} \Big\|_{L^p} \|M_{\calD} g\|_{L^q} \\
&\lesssim \Big\| \Big( \sum_Q  |D_Q^i f|^2  \Big)^{1/2} \Big\|_{L^p} \|g\|_{L^q} \lesssim \|f\|_{L^p} \|g\|_{L^q}.
\end{align*}

\subsection{Bilinear paraproduct}
Let $\alpha=\{\alpha_K\}_{K \in \calD}$ be sequence of complex numbers such that
\begin{equation}\label{eq:BMO2}
\frac{1}{|K_0|}\sum_{K \colon K \subset K_0} |\alpha_K|^2 \le 1
\end{equation}
for all $K_0 \in \calD$.
We define the bilinear paraproduct
$$
\Pi_{\alpha}(f,g) = \sum_{K} \alpha_K \bla f \bra_K \bla g \bra_K h_K.
$$
To deal with this it is useful to recall
the usual (linear) paraproduct
$$
\pi_{\alpha} f = \sum_{K} \alpha_K \bla f \bra_K h_K.
$$
It is well known that $\pi_{\alpha} \colon L^r \to L^r$ boundedly for $1 < r < \infty$ because of the condition \eqref{eq:BMO2}. An elegant way to do this directly in $L^r$ is in \cite{HH}.
It follows that
$\Pi_{\alpha} \colon L^p \times L^q \to L^r$ boundedly for $1 < p, q, r < \infty$ satisfying $1/r = 1/p + 1/q$. Indeed, it holds that
\begin{align*}
\| \Pi_{\alpha}(f,g) \|_{L^r}  &\sim \Big\| \Big( \sum_{K} |\alpha_K|^2 |\bla f \bra_K|^2 |\bla g \bra_K|^2 \frac{1_K}{|K|} \Big)^{1/2} \Big\|_{L^r} \\
&\le \Big\| \Big( \sum_{K} |\alpha_K|^2\bla \calM_{\calD}(f,g) \bra_K^2 \frac{1_K}{|K|} \Big)^{1/2} \Big\|_{L^r} \\
&\sim \| \pi_{\alpha}( \calM_{\calD}(f,g) ) \|_{L^r}
\lesssim \| \calM_{\calD}(f,g) \|_{L^r} \lesssim \|f\|_{L^p} \|g\|_{L^q}.
\end{align*}

\section{Proof of the bilinear representation theorem, Theorem \ref{thm:main}}
Consider an arbitrary $\varepsilon_1 > 0$ and let $f$, $g$ and $h$ be bounded functions with compact support.
For the moment, let $\varepsilon_2 > \varepsilon_1$ be arbitrary, and write
$T=T^\varphi_{\vare_1, \vare_2}$ and $K=K^\varphi_{\vare_1, \vare_2}$. This is an a priori bounded operator (for example in the $L^4 \times L^4 \to L^2$ sense), which makes
the calculations below legit. We will decompose $\bla T(f , g), h \bra$ first, and take the limit $\epsilon_2 \to \infty$ at the end.

Begin by decomposing $\bla T(f , g), h \bra$ as
\begin{equation*}
\begin{split}
\bla T(f , g), h \bra
&= E_{\omega} \sum_{K \in \calD_{\omega}} \sum_{\substack{I \in \calD_{\omega} \\ \ell(K) \leq \ell(I)  }}
\sum_{\substack{J \in \calD_{\omega} \\ \ell(K) \leq \ell(J) }} \bla T(\Delta_I f , \Delta_Jg), \Delta_K h \bra \\
& + E_{\omega} \sum_{I \in \calD_{\omega}} \sum_{\substack{J \in \calD_{\omega} \\ \ell(I) \leq \ell(J)  }}
\sum_{\substack{K \in \calD_{\omega} \\ \ell(I) < \ell(K) }} \bla T^{1*}(\Delta_K h , \Delta_Jg), \Delta_I f \bra \\
& +E_{\omega} \sum_{J \in \calD_{\omega}} \sum_{\substack{I \in \calD_{\omega} \\ \ell(J) < \ell(I)  }}
\sum_{\substack{K \in \calD_{\omega} \\ \ell(J) < \ell(K) }} \bla T^{2*}(\Delta_I f , \Delta_Kh), \Delta_J g \bra =: \Sigma^1 + \Sigma^2 + \Sigma^3.
\end{split}
\end{equation*}
We focus on the first sum $\Sigma^1$, and at this point write
\begin{align*}
\sum_{K \in \calD_{\omega}} \sum_{\substack{I \in \calD_{\omega} \\ \ell(K) \leq \ell(I)  }}
\sum_{\substack{J \in \calD_{\omega} \\ \ell(K) \leq \ell(J) }} \bla T(\Delta_I f , \Delta_Jg), \Delta_K h \bra
= \sum_{K \in \calD_{\omega}}  \bla T(E^{\omega}_{\ell(K)/2} f , E^{\omega}_{\ell(K)/2} g), \Delta_K h \bra,
\end{align*}
where
$$
E^{\omega}_{\ell(K)/2} f = \mathop{\sum_{I \in \calD_{\omega}}}_{\ell(I) = \ell(K) / 2} 1_I \bla f \bra_I.
$$
The point of doing this is to gain the needed independence for the argument below (this seems to be a new simpler way to add goodness than in \cite{Hy2}, and is straightforward to use also in this bilinear setting).
Write now $\calD_{\omega} = \calD_0 + \omega$ to the end that
$$
\sum_{K \in \calD_{\omega}}  \bla T(E^{\omega}_{\ell(K)/2} f , E^{\omega}_{\ell(K)/2} g), \Delta_K h \bra = \sum_{K \in \calD_0}  \bla T(E^{\omega}_{\ell(K)/2} f , E^{\omega}_{\ell(K)/2} g), \Delta_{K+\omega} h \bra.
$$
Next, we write
\begin{align*}
\Sigma^1 = E_{\omega} & \sum_{K \in \calD_0}  \bla T(E^{\omega}_{\ell(K)/2} f , E^{\omega}_{\ell(K)/2} g), \Delta_{K+\omega} h \bra \\
&= \frac{1}{\pi_{\textup{good}}} \sum_{K \in \calD_0} E_{\omega}[1_{\textup{good}}(K+\omega)] E_{\omega} [\bla T(E^{\omega}_{\ell(K)/2} f , E^{\omega}_{\ell(K)/2} g), \Delta_{K+\omega} h \bra] \\ 
&= \frac{1}{\pi_{\textup{good}}} E_{\omega} \sum_{K \in \calD_{\omega,\, \textup{good}}}  \bla T(E^{\omega}_{\ell(K)/2} f , E^{\omega}_{\ell(K)/2} g), \Delta_K h \bra =: \frac{1}{\pi_{\textup{good}}} E_{\omega} \Sigma^1(\omega),
\end{align*}
where we used independence: $1_{\textup{good}}(K+\omega)$ depends on $\omega_j$ for $2^{-j} \ge \ell(K)$ while
$E^{\omega}_{\ell(K)/2} f$ depends on $\omega_j$ for $2^{-j} < \ell(K)/2 < \ell(K)$, same for $E^{\omega}_{\ell(K)/2} g$, and
$\Delta_K h$ depends on $\omega_j$ for $2^{-j} < \ell(K)$.

Fix $\omega$ and let $\calD_{\omega} = \calD$. We will now start finding the shift structure in the sum $\Sigma^1(\omega)$ i.e.
$$
\sum_{K \in \calD_{\textup{good}}}  \sum_{\substack{I \in \calD \\ \ell(K) \leq \ell(I)  }}
\sum_{\substack{J \in \calD \\ \ell(K) \leq \ell(J) }} \bla T(\Delta_I f , \Delta_Jg), \Delta_K h \bra.
$$
The double sum $\sum_{\substack{I \in \calD \\ \ell(K) \leq \ell(I)  }} \sum_{\substack{J \in \calD \\ \ell(K) \leq \ell(J) }}$ 
can be organised as
$$
\sum_{\substack{I \in \calD \\ \ell(K) \leq \ell(I)  }} \sum_{\substack{J \in \calD \\ \ell(I) \leq \ell(J)  }}
+\sum_{\substack{J \in \calD \\ \ell(K) \leq \ell(J) }} \sum_{\substack{I \in \calD \\ \ell(J) < \ell(I)  }}.
$$
This leads to the fact that
\begin{align*}
\sum_{K \in \calD_{\textup{good}}}  &\sum_{\substack{I \in \calD \\ \ell(K) \leq \ell(I)  }}
\sum_{\substack{J \in \calD \\ \ell(K) \leq \ell(J) }} \bla T(\Delta_I f , \Delta_Jg), \Delta_K h \bra  \\
&=  \sum_{K \in \calD_{\textup{good}}} \sum_{\substack{I \in \calD \\ \ell(K) \leq \ell(I)  }} \bla T(\Delta_I f , E_{\ell(I)/2} g), \Delta_K h \bra \\
&+ \sum_{K \in \calD_{\textup{good}}} \sum_{\substack{J \in \calD \\ \ell(K) \leq \ell(J) }}  \bla T(E_{\ell(J)} f , \Delta_Jg), \Delta_K h \bra =: \sigma^1 + \sigma^2.
\end{align*}
We will now mostly focus on the part
\begin{align*}
\sigma^1 = \sum_{K \in \calD_{\textup{good}}} &\sum_{\substack{I \in \calD \\ \ell(K) \leq \ell(I)  }} \bla T(\Delta_I f , E_{\ell(I)/2} g), \Delta_K h \bra \\
&= \sum_{K \in \calD_{\textup{good}}} \sum_{\substack{I \in \calD \\ \ell(K) \leq \ell(I)  }} \mathop{\sum_{J \in \calD}}_{\ell(J) = \ell(I) / 2} 
 \bla T(\Delta_I f ,1_J \bla g \bra_J ), \Delta_K h \bra.
\end{align*}
However, to get a simple paraproduct it is crucial to combine i.e. sum up the paraproduct parts from these two parts $\sigma^1$ and $\sigma^2$.

\subsection*{Step I: separated part}
In this section we consider
\begin{align*}
\sigma^1_1&:= \sum_{K \in \calD_{\textup{good}}} \sum_{\substack{I,J \in \calD \colon \\ \ell(K)\le \ell (I)=2\ell(J) \\ \max( d(K,I), d(K,J)) > \ell(K)^\gamma \ell(J)^{1-\gamma}}} 
\bla T(\Delta_I f ,1_J \bla g \bra_J ), \Delta_K h \bra \\
&= \sum_{K \in \calD_{\textup{good}}} \sum_{\substack{I,J \in \calD \colon \\ \ell(K)\le \ell (I)=2\ell(J) \\ \max( d(K,I), d(K,J)) > \ell(K)^\gamma \ell(J)^{1-\gamma}}} 
\bla T(h_I, h_J^0), h_K \bra \bla f, h_I \bra \bla g, h_J^0\bra \bla h, h_K \bra.
\end{align*}
We need the existence of certain nice parents, the proof in the bilinear setting is essentially the same as in \cite{Hy2}.
\begin{lem}
For $I, J, K$ as in $\sigma^1_1$ there exists a cube $Q \in \calD$ so that $I \cup J \cup K \subset Q$ and
$$
\max(d(K,I), d(K,J)) \gtrsim \ell(K)^{\gamma}\ell(Q)^{1-\gamma}.
$$
\end{lem}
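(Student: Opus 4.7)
The plan is to take $Q$ to be the minimal dyadic ancestor in $\calD$ of $I\cup J\cup K$ and to exploit the goodness of $K$ together with the minimality of $Q$. Write $r:=\max(d(K,I),d(K,J))$ and let $r_0$ denote the goodness parameter appearing in the definition of good cubes.

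First I would dispose of the trivial regime $\ell(Q)\le 2^{r_0+1}\ell(K)$. Here all four side lengths $\ell(K),\ell(I),\ell(J),\ell(Q)$ are mutually comparable, so $\ell(K)^\gamma\ell(Q)^{1-\gamma}\sim \ell(K)$; and the hypothesis $r>\ell(K)^\gamma\ell(J)^{1-\gamma}$ combined with $\ell(J)\ge \ell(K)/2$ forces $r\gtrsim \ell(K)$, which is what we want.

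In the remaining regime $\ell(Q)>2^{r_0+1}\ell(K)$, let $L$ be the unique child of $Q$ containing $K$; then $\ell(L)=\ell(Q)/2>2^{r_0}\ell(K)$, so applying the goodness of $K$ to $L$ gives
\begin{equation*}
d(K,\partial L) > \ell(K)^\gamma \ell(L)^{1-\gamma} \sim \ell(K)^\gamma \ell(Q)^{1-\gamma}.
\end{equation*}
By the minimality of $Q$, the cube $L$ cannot contain all of $I\cup J\cup K$, so at least one of $I,J$ escapes $L$; by the (near) symmetric roles of $I$ and $J$ in the hypothesis, assume $I\not\subset L$. Since $I,L\in\calD$ and $I\subset Q$, there are exactly two sub-cases. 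If $I\cap L=\emptyset$, then every point of $I$ lies outside $L$, hence $d(K,I)\ge d(K,\partial L)$ and the conclusion follows from the display above. Otherwise $I\supsetneq L$, which forces $I=Q$ (since $I\subset Q$ and $\ell(L)=\ell(Q)/2$); in that case $\ell(J)=\ell(Q)/2\sim\ell(Q)$, so the hypothesis $r>\ell(K)^\gamma\ell(J)^{1-\gamma}$ already delivers the desired bound. The same reasoning applied with $J$ in place of $I$ handles the case $J\not\subset L$, except that the analogue of $I=Q$ cannot occur because $\ell(J)<\ell(I)\le\ell(Q)$.

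There is no real obstacle here: this is essentially the standard linear \emph{good cube parent} argument from \cite{Hy2}, and the only bilinear wrinkle is that there are two input cubes to track instead of one. The only substantive step is the goodness estimate for $d(K,\partial L)$ together with the observation that minimality of $Q$ forces at least one of $I,J$ to leave the child of $Q$ containing $K$; everything else is a side-length comparison.
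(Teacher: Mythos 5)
Your proof is correct, and it takes a genuinely different (though related) route from the paper's. You define $Q$ directly as the minimal dyadic cube containing $I\cup J\cup K$, then extract the distance lower bound by looking at the child $L$ of $Q$ that contains $K$: goodness gives $d(K,\partial L)\gtrsim\ell(K)^\gamma\ell(Q)^{1-\gamma}$, and minimality forces one of $I,J$ to escape $L$, from which the bound follows after a short dyadic case split (the only case needing separate treatment being $I=Q$, where you fall back on the hypothesis $\max(d(K,I),d(K,J))>\ell(K)^\gamma\ell(J)^{1-\gamma}$ with $\ell(J)\sim\ell(Q)$). The paper instead defines $Q$ as the smallest dyadic ancestor of $K$ satisfying both $\ell(Q)\ge 2^r\ell(K)$ and $\max(d(K,I),d(K,J))\le\ell(K)^\gamma\ell(Q)^{1-\gamma}$, then uses goodness to show $I,J\subset Q$ and uses the minimality of that $Q$ (either $\ell(Q)=2^r\ell(K)\sim\ell(J)$, or the next child fails the distance inequality) to get the lower bound. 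The paper's choice bakes the distance constraint into the definition of $Q$, which makes both the containment and the lower bound essentially immediate, but produces a $Q$ that may be strictly larger than $I\vee J\vee K$; it then passes to the minimal cube afterwards by monotonicity of $\ell\mapsto\ell^{1-\gamma}$. Your version proves the bound directly for the minimal cube, at the cost of a slightly longer case analysis inside $Q$. Both are standard incarnations of the good-parent trick and either one serves the subsequent shift normalisation equally well.
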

\begin{proof}
Let $Q \in \calD$ be the minimal parent of $K$ for which both of the following two conditions hold:
\begin{itemize}
\item $\ell(Q) \ge 2^r \ell(K)$;
\item $\max(d(K,I), d(K,J)) \le \ell(K)^{\gamma}\ell(Q)^{1-\gamma}$.
\end{itemize}
Since $\ell(Q) \ge 2^r \ell(K)$, the goodness of $K$ gives that
$$
\ell(K)^{\gamma}\ell(Q)^{1-\gamma} < d(K, Q^c).
$$
If we would have that $I \subset Q^c$ or $J \subset Q^c$ we would get
$$
\ell(K)^{\gamma}\ell(Q)^{1-\gamma} < \max(d(K,I), d(K,J)) \le \ell(K)^{\gamma}\ell(Q)^{1-\gamma},
$$
which is a contradiction. Therefore, we have $I \cap Q \ne \emptyset$ and $J \cap Q \ne \emptyset$. Moreover, we have
$$
\ell(K)^{\gamma}\ell(J)^{1-\gamma} <  \max( d(K,I), d(K,J)) \le \ell(K)^{\gamma}\ell(Q)^{1-\gamma}
$$
implying that $\ell(Q) > \ell(J)$, and so also $\ell(Q) \ge \ell(I)$. This implies $I \cup J \cup K \subset Q$.

It remains to note that the estimate $\max(d(K,I), d(K,J)) \gtrsim \ell(K)^{\gamma}\ell(Q)^{1-\gamma}$ is a trivial consequence of the minimality of $Q$.
Indeed, there is something to check only if $Q$ is minimal because $\ell(Q) \lesssim \ell(K)$. But then $\ell(Q) \lesssim \ell(J)$ and we get
$$
\ell(K)^{\gamma}\ell(Q)^{1-\gamma} \lesssim \ell(K)^{\gamma}\ell(J)^{1-\gamma} < \max( d(K,I), d(K,J)).
$$
\end{proof}
For $I, J, K$ as in $\sigma^1_1$ we let $Q = I \vee J \vee K$ be the minimal cube $Q \in \calD$ so that $I \cup J \cup K \subset Q$. We then know that
\begin{equation}\label{eq:parentSEP}
\max(d(K,I), d(K,J)) \gtrsim \ell(K)^{\gamma}\ell(Q)^{1-\gamma}.
\end{equation}
Let us write
$$
\sigma^1_1 = \sum_{k=0}^{\infty} \sum_{i = 0}^{k} \sum_{Q \in \calD} \mathop{\mathop{\mathop{\sum_{I, J \in \calD, \, K \in \calD_{\good}}}_{\max( d(K,I), d(K,J)) > \ell(K)^{\gamma}\ell(J)^{1-\gamma}}}_{2\ell(J) = \ell(I) = 2^{-i}\ell(Q), \, \ell(K) = 2^{-k}\ell(Q)}}_{I \vee J \vee K = Q}
\bla T(h_I, h_J^0), h_K \bra \bla f, h_I \bra \bla g, h_J^0\bra \bla h, h_K \bra.
$$
Next, we define
$$
\alpha_{I,J,K,Q} = \frac{ \bla T(h_I, h_J^0), h_K \bra }{C (\ell(K) / \ell(Q))^{\alpha/2}}
$$
if $I, J \in \calD, \, K \in \calD_{\good}$, $\max( d(K,I), d(K,J)) > \ell(K)^{\gamma}\ell(J)^{1-\gamma}$, $\ell(K) \le \ell(I) = 2\ell(J)$ and $I \vee J \vee K = Q$,
and $\alpha_{I,J,K,Q} = 0$ otherwise. We can then write for fixed $k \ge 0$ and $i \le k$ that
\begin{align*}
\sum_{Q \in \calD} & \mathop{\mathop{\mathop{\sum_{I, J \in \calD, \, K \in \calD_{\good}}}_{\max( d(K,I), d(K,J)) > \ell(K)^{\gamma}\ell(J)^{1-\gamma}}}_{2\ell(J) = \ell(I) = 2^{-i}\ell(Q), \, \ell(K) = 2^{-k}\ell(Q)}}_{I \vee J \vee K = Q}
\bla T(h_I, h_J^0), h_K \bra \bla f, h_I \bra \bla g, h_J^0\bra h_K \\
&= C2^{-\alpha k / 2} \sum_{Q \in \calD} \mathop{\mathop{\mathop{\sum_{I, J, K \subset Q}}_{\ell(I) = 2^{-i}\ell(Q)}}_{\ell(J) = 2^{-i-1} \ell(Q)}}_{\ell(K) = 2^{-k}\ell(Q)} \alpha_{I,J,K,Q} \bla f, h_I \bra \bla g, h_J^0 \bra h_K =: C2^{-\alpha k / 2} S^{i,i+1,k}(f,g),
\end{align*}
which gives
$$
\sigma^1_1 =  C \sum_{k=0}^{\infty} \sum_{ i=0}^{k} 2^{-k\alpha/2} \bla S^{i,i+1,k}(f,g), h \bra.
$$

It remains to verify that
$$
|\alpha_{I,J,K,Q}| \le \frac{|I|^{1/2} |J|^{1/2} |K|^{1/2} } { |Q|^2}
$$
for an appropriate choice of the constant $C$ depending on the kernel estimates. We fix $I, J, K, Q$ so that $\alpha_{I,J,K,Q} \ne 0$.
Notice that
$|x-c_K| \le \ell(K)/2$ (we are using the $\ell^{\infty}$ distance) for $x \in K$ while
$$
\max(|x-y|, |x-z|) \ge \max( d(K,I), d(K,J)) > \ell(K)^{\gamma}\ell(J)^{1-\gamma} \ge 2^{\gamma} \frac{\ell(K)}{2} \ge 2^{\gamma}|x-c_K|
$$
for $x \in K$, $y \in I$ and $z \in J$. Therefore, we have by the H\"older estimate in the $x$ variable and the estimate \eqref{eq:parentSEP} that
\begin{align*}
|\bla T(h_I, h_J^0), h_K \bra| &\lesssim \|h_I\|_{L^1} \|h_J^0\|_{L^1} \|h_K\|_{L^1}  \frac{\ell(K)^{\alpha}}{\max( d(K,I), d(K,J))^{2n+\alpha}} \\
&\lesssim |I|^{1/2} |J|^{1/2} |K|^{1/2} \frac{\ell(K)^{\alpha}}{(\ell(K)^{\gamma}\ell(Q)^{1-\gamma})^{2n+\alpha}} \\
&= \frac{|I|^{1/2} |J|^{1/2} |K|^{1/2} } { |Q|^2} \Big(\frac{ \ell(K) }{\ell(Q)} \Big)^{\alpha-\gamma(2n+\alpha)} \\
& = \frac{|I|^{1/2} |J|^{1/2} |K|^{1/2} } { |Q|^2} \Big(\frac{ \ell(K) }{\ell(Q)} \Big)^{\alpha/2}.
\end{align*}
This establishes the desired normalisation, and therefore we are done with $\sigma^1_1$.

\subsection*{Step II: diagonal}
Here we look at the sum
\begin{align*}
\sigma^1_2 &:= 
 \sum_{K \in \calD_{\textup{good}}} \mathop{\sum_{\substack{I,J \in \calD \colon \\ \ell(K)\le \ell (I)=2\ell(J) \\ \max( d(K,I), d(K,J)) \le \ell(K)^\gamma \ell(J)^{1-\gamma}}}}_{K \cap I = \emptyset \textup{ or } K = I \textup{ or } K \cap J = \emptyset}
\bla T(\Delta_I f ,1_J \bla g \bra_J ), \Delta_K h \bra \\
&= \sum_{K \in \calD_{\textup{good}}} \mathop{\sum_{\substack{I,J \in \calD \colon \\ \ell(K)\le \ell (I)=2\ell(J) \le 2^r\ell(K) \\ \max( d(K,I), d(K,J)) \le \ell(K)^\gamma \ell(J)^{1-\gamma}}}}_{K \cap I = \emptyset \textup{ or } K = I \textup{ or } K \cap J = \emptyset}
\bla T(h_I, h_J^0), h_K \bra \bla f, h_I \bra \bla g, h_J^0\bra \bla h, h_K \bra.
\end{align*}
The goodness of the cube $K$ was used to conclude that we cannot have $\ell(I) > 2^r\ell(K)$. Indeed, in the case $K \cap I = \emptyset$ this would imply
$d(K,I) > \ell(K)^\gamma \ell(I)^{1-\gamma} \ge \ell(K)^\gamma \ell(J)^{1-\gamma}$ -- a contradiction. In the case $K \cap J = \emptyset$ we would have (as $\ell(J) \ge 2^r\ell(K)$)
that $d(K,J) > \ell(K)^\gamma \ell(J)^{1-\gamma}$ -- a contradiction.
\begin{lem}
For $I, J, K$ as in $\sigma^1_2$ there exists a cube $Q \in \calD$ so that $I \cup J \cup K \subset Q$ and
$\ell(Q) \le 2^r\ell(K)$.
\end{lem}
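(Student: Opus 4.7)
The natural candidate is $Q := K^{(r)}$, the dyadic ancestor of $K$ with $\ell(Q) = 2^{r}\ell(K)$. Trivially $K \subset Q$, and the side-length constraint $\ell(Q) \le 2^{r}\ell(K)$ holds with equality, so the only thing to verify is $I \subset Q$ and $J \subset Q$.

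Since in the summation defining $\sigma^1_2$ one has $\ell(I) = 2\ell(J) \le 2^{r}\ell(K) = \ell(Q)$, both $I$ and $J$ have side length at most $\ell(Q)$ (and $\ell(J) < \ell(Q)$ strictly). By the dyadic dichotomy each of $I, J$ is either contained in $Q$ or disjoint from $Q$, so the task reduces to ruling out the disjoint case. The plan is to combine goodness of $K$ with the diagonal distance hypothesis. Since $\ell(Q) \ge 2^{r}\ell(K)$, the definition of goodness applied to $K$ with respect to the ancestor $Q$ gives
\begin{equation*}
d(K, \partial Q) > \ell(K)^{\gamma}\ell(Q)^{1-\gamma}.
\end{equation*}
Using $\ell(J) \le \ell(Q)$, the hypothesis of $\sigma^1_2$ can be upgraded to
\begin{equation*}
\max\bigl(d(K,I),\, d(K,J)\bigr) \le \ell(K)^{\gamma}\ell(J)^{1-\gamma} \le \ell(K)^{\gamma}\ell(Q)^{1-\gamma} < d(K, \partial Q).
\end{equation*}
If $I \cap Q = \emptyset$, then since $I$ is a (connected) cube with $K \subset Q$, any path from $K$ to $I$ must cross $\partial Q$, so $d(K,I) \ge d(K,\partial Q)$, contradicting the display above. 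The same reasoning rules out $J \cap Q = \emptyset$. Hence $I \cup J \cup K \subset Q$, as desired.

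There is no real obstacle here; the only small point is that the sum's distance hypothesis is phrased in terms of $\ell(J)^{1-\gamma}$ rather than $\ell(Q)^{1-\gamma}$, but $\ell(J) < \ell(Q)$ makes the comparison trivial. The ancestor $Q = K^{(r)}$ is essentially forced: it is the smallest ancestor for which goodness of $K$ is available, and it simultaneously realizes the side-length bound required in the conclusion.
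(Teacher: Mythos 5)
Your proof is correct and follows essentially the same route as the paper: you take $Q = K^{(r)}$, invoke goodness of $K$ to get $d(K,\partial Q) > \ell(K)^{\gamma}\ell(Q)^{1-\gamma}$, and rule out $I$ or $J$ escaping $Q$ by chaining this against the diagonal distance hypothesis $\max(d(K,I),d(K,J)) \le \ell(K)^{\gamma}\ell(J)^{1-\gamma}$ together with $\ell(J) < \ell(Q)$. The only cosmetic difference is the order in which the inequality chain is written; the paper phrases the contradiction as $\ell(J) > \ell(Q)$ while you arrive at a directly inconsistent distance inequality.
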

\begin{proof}
Define $Q = K^{(r)}$. Then $\ell(Q) = 2^r\ell(K) \ge \ell(I) > \ell(J)$. Therefore, it suffices to show that $I \cap Q \ne \emptyset$ and $J \cap Q \ne \emptyset$.
But this is essentially the same argument as previously:
If we would have that $I \subset Q^c$ or $J \subset Q^c$, we would get
$$
\ell(K)^{\gamma}\ell(Q)^{1-\gamma} < d(K, Q^c) \le \max(d(K,I), d(K,J)) \le \ell(K)^{\gamma}\ell(J)^{1-\gamma},
$$
which implies $\ell(J) > \ell(Q)$ -- a contradiction.
\end{proof}
We can now write
$$
\sigma^1_2= \sum_{k=0}^r \sum_{i=0}^k \sum_Q \sum_{\substack{I,J \in \calD,\, K \in \calD_{\good} \colon \\ \max( d(K,I), d(K,J)) \le \ell(K)^\gamma \ell(J)^{1-\gamma} \\ K \cap I = \emptyset \textup{ or } K = I \textup{ or } K \cap J = \emptyset
\\ 2\ell(J) = \ell(I) = 2^{-i}\ell(Q), \, \ell(K) = 2^{-k}\ell(Q) \\  I \vee J \vee K = Q}} \bla T(h_I, h_J^0), h_K \bra \bla f, h_I \bra \bla g, h_J^0\bra \bla h, h_K \bra.
$$
Notice that if $K \cap I = \emptyset$ then
\begin{align*}
|\bla T(h_I, h_J^0), h_K \bra| &\lesssim |I|^{-1/2} |J|^{-1/2} |K|^{-1/2} \int_{10I \setminus I} \int_I \frac{dy\,dx}{|x-y|^n} \\
&\lesssim |I|^{1/2}  |J|^{-1/2} |K|^{-1/2} \sim \frac{|I|^{1/2} |J|^{1/2} |K|^{1/2} } { |Q|^2} \Big(\frac{ \ell(K) }{\ell(Q)} \Big)^{\alpha/2}.
\end{align*}
We get the same bound also if $K \cap J = \emptyset$ with an analogous calculation. So we only need to estimate in the case
$K = I$ and $J \in \textup{ch}(K)$. Then we have
\begin{align*}
|\bla T(h_K, h_J^0), h_K \bra| \lesssim |K|^{-3/2} \sum_{K', K'' \in \textup{ch}(K)}  |\bla T(1_{K'}, 1_J), 1_{K''} \bra|.
\end{align*}
If $K' \ne J$ or $K'' \ne J$ then $|\bla T(1_{K'}, 1_J), 1_{K''} \bra| \lesssim |K|$ simply by the size estimate of the kernel. In the case $K' = K'' = J$ we have using the weak boundedness property
that $|\bla T(1_{J}, 1_J), 1_{J} \bra| \lesssim |K|$.
So in the case $K = I$ and $J \in \textup{ch}(K)$ we also have
$$
|\bla T(h_K, h_J^0), h_K \bra| \lesssim |K|^{-1/2} \sim \frac{|I|^{1/2} |J|^{1/2} |K|^{1/2} } { |Q|^2} \Big(\frac{ \ell(K) }{\ell(Q)} \Big)^{\alpha/2}.
$$

The above lets us write
$$
\sigma^1_2= C \sum_{k=0}^r \sum_{i=0}^k 2^{-k\alpha/2} \bla S^{i,i+1,k}(f,g), h \bra 
$$
for cancellative bilinear shifts $S^{i,i+1,k}$, where $C$ depends on the kernel estimates and the weak boundedness property.
We point out at this point that since $T=T^\varphi_{\vare_1, \vare_2}=T^\varphi_{\vare_1}-T^\varphi_{\vare_2}$, there holds 
$$
\| T \|_{\operatorname{WBP}} \le C'\big( \| K \|_{\operatorname{CZ}_\alpha} + \sup_{\delta>0} \| T_\delta \|_{\operatorname{WBP}}).
$$

\subsection{Step III: error terms}
Here we start working with the sum
\begin{align*}
\sigma^1_3 &:= \sum_{\substack{I,J \in \calD,\, K \in \calD_{\good} \\  \ell(I) = 2\ell(J) \\ K \subset J \subset I}}
\bla T(\Delta_I f ,1_J \bla g \bra_J ), \Delta_K h \bra \\
&= \sum_{\substack{J \in \calD,\, K \in \calD_{\good} \\ K \subset J}}
\bla T(\Delta_{J^{(1)}} f ,1_J), \Delta_K h \bra \bla g \bra_J.
\end{align*}
We split
\begin{align*}
\bla T(\Delta_{J^{(1)}} f ,1_J), \Delta_K h \bra &=  \bla T( 1_{J^c}(\Delta_{J^{(1)}}f-\bla \Delta_{J^{(1)}}f \bra_{J}) , 1_J), \Delta_K h  \bra \\
 &- \bla \Delta_{J^{(1)}}f \bra_{J}\bla T(1, 1_{J^c} ), \Delta_K h  \bra
 +  \bla \Delta_{J^{(1)}}f \bra_{J} \bla T(1, 1), \Delta_K h  \bra.
\end{align*}
This gives us the decomposition $\sigma^1_3 = \sigma^1_{3, e} + \sigma^1_{3, \pi}$, where
the first two terms of the above decomposition are part of $\sigma^1_{3,e}$.

In this section we only deal with the error term $\sigma^1_{3,e}$. Notice that
\begin{align*}
\sigma^1_{3,e} = \sum_{\substack{J \in \calD,\, K \in \calD_{\good} \\ K \subset J}} &|J|^{-1/2}\big[\bla T(s_J,1_J), h_K\bra \\
&- \bla h_{J^{(1)}} \bra_{J}\bla T(1, 1_{J^c} ), h_K \bra \big]  \bla f, h_{J^{(1)}} \bra \bla g, h_J^0 \bra \bla h, h_K \bra,
\end{align*}
where $s_J := 1_{J^c}(h_{J^{(1)}}-\bla h_{J^{(1)}} \bra_{J})$ satisfies $|s_J| \lesssim |J|^{-1/2}$ and spt$\,s_J \subset J^c$.

We will first bound $|\bla T(s_J,1_J), h_K \bra|$. In the case $\ell(J) \sim \ell(K)$
we are looking for the bound $|\bla T(s_J,1_J), h_K \bra| \lesssim 1$. This follows by writing
$$
|\bla T(s_J,1_J), h_K \bra| \le |\bla T(1_{3J} s_J,1_J), h_K \bra|  + |\bla T(1_{(3J)^c} s_J,1_J), h_K \bra|,
$$
and using the size and H\"older estimate in the $x$-variable respectively. If $\ell(J) \ge 2^r \ell(K)$ we have
$d(K, J^c) \ge \ell(K)^{\gamma}\ell(J)^{1-\gamma} \ge \ell(K)^{1/2}\ell(J)^{1/2}$. Therefore, H\"older estimate in the $x$-variable gives
\begin{align*}
|\bla T(s_J,1_J), h_K \bra| &\lesssim |K|^{-1/2} |J|^{-1/2}\ell(K)^{\alpha} \int_K \int_{J^c} \frac{dy}{|x-y|^{n+\alpha}} \,dx \\
&\lesssim |K|^{1/2} |J|^{-1/2} \Big( \frac{\ell(K)}{\ell(J)} \Big)^{\alpha/2}.
\end{align*}
Notice that this is $\sim 1$ if $\ell(J) \sim \ell(K)$, so the same estimate holds in both cases.

It is now also obvious, using almost exactly the same calculations as above, that
$$
|\bla T(1, 1_{J^c} ), h_K \bra| \lesssim |K|^{1/2} \Big( \frac{\ell(K)}{\ell(J)} \Big)^{\alpha/2}.
$$
But as $|\bla h_{J^{(1)}} \bra_{J}| \lesssim |J|^{-1/2}$ we have the same bound as above. Therefore, we can write
$$
\sigma^1_{3,e}= C \sum_{k=1}^{\infty} 2^{-\alpha k / 2} \bla S^{0,1,k}(f,g), h\bra
$$
for some cancellative bilinear shifts and for some $C$ depending on the kernel estimates.

\subsection{Part IV: paraproduct}
Here we combine
$$
\sigma^1_{3, \pi} = \sum_{\substack{J \in \calD,\, K \in \calD_{\good} \\ K \subset J}} \bla T(1, 1), \Delta_K h  \bra \bla \Delta_{J^{(1)}}f \bra_{J}  \bla g \bra_J
$$
with the relevant paraproduct type term coming from $\sigma^2$, namely
$$
\sigma^2_{3, \pi} = \sum_{\substack{J \in \calD,\, K \in \calD_{\good} \\ K \subset J}} \bla T(1, 1), \Delta_K h  \bra \bla f \bra_{J^{(1)}} \bla \Delta_{J^{(1)}}g \bra_{J}.
$$
Notice the key cancellation
$$
\bla \Delta_{J^{(1)}}f \bra_{J}  \bla g \bra_J + \bla f \bra_{J^{(1)}} \bla \Delta_{J^{(1)}}g \bra_{J} = \bla f \bra_J \bla g \bra_J - \bla f \bra_{J^{(1)}}  \bla g \bra_{J^{(1)}}.
$$
Therefore, we get
$$
\sigma^1_{3, \pi} + \sigma^2_{3, \pi} = \sum_{K \in \calD_{\good}}  \bla T(1, 1), \Delta_K h  \bra \bla f \bra_K \bla g \bra_K.
$$
Define
\begin{equation}\label{eq:CoefPara0}
\alpha_K = \frac{\bla T(1,1), h_K \bra}{C \big(\| K \|_{\operatorname{CZ}_\alpha}+ \sup_{\delta>0}\| T_\delta(1,1)\|_{\BMO}\big)}
\end{equation}
if $K$ is good, where $C$ is a large enough absolute constant, and otherwise set $\alpha_K=0$. 
Recall that $T=T^\varphi_{\vare_1, \vare_2}=T^\varphi_{\vare_1}-T^\varphi_{\vare_2}$, whence in view of Lemma \ref{lem:BMOTransfer} and Lemma \ref{lem:JN}
the numbers $\alpha_K$ satisfy the correct normalisation \eqref{eq:BMO2}.
Hence we can write
$$
\sigma^1_{3, \pi} + \sigma^2_{3, \pi} = C \big(\| K \|_{\operatorname{CZ}_\alpha}+ \sup_{\delta>0}\| T_\delta(1,1)\|_{\BMO}\big)\langle \Pi_{\alpha} (f,g),h \rangle.
$$

\subsection{Synthesis}\label{sec:Synth}
Let us collect the pieces of the above steps together. Recall that the operator $T$ is actually $T^\varphi_{\vare_1, \vare_2}$. We have shown that 
\begin{equation*}
\begin{split}
\Sigma^1(\omega) 
&=C(\|K\|_{\operatorname{CZ}_{\alpha}} + \sup_{\delta>0}\|T_{\delta}\|_{\operatorname{WBP}})   
\sum_{k=0}^\infty \sum_{i=0}^{k}2^{-\alpha k/2} \bla U^{i,k}_{\varepsilon_1,\vare_2, \varphi, \omega} (f,g),h\bra \\
&+C(\|K\|_{\operatorname{CZ}_{\alpha}}+ \sup_{\delta>0}\|T_{\delta}(1,1)\|_{\operatorname{BMO}}) \bla \Pi_{\alpha_0(\varepsilon_1, \vare_2, \varphi, \omega)}(f,g), h \bra,
\end{split}
\end{equation*}
where each $U^{i,k}_{\varepsilon_1, \vare_2, \varphi, \omega}$
is a sum of  cancellative shifts $S^{i,i,k}_{\varepsilon_1,\vare_2, \varphi, \omega}$ and $S^{i,i+1,k}_{\varepsilon_1,\vare_2, \varphi, \omega}$, and where
$\Pi_{\alpha_0(\varepsilon_1, \vare_2, \varphi, \omega)}$
is the paraproduct related to the sequence defined around Equation \eqref{eq:CoefPara0}. Collecting together the symmetric parts we get the result of Theorem \ref{thm:main} except
we have the dependence on $\epsilon_2$ on both sides. However, it is clear that $\bla T^\varphi_{\vare_1, \vare_2}(f,g), h \bra = \bla T^\varphi_{\vare_1}(f,g), h \bra$ if
$\epsilon_2$ is large enough (depending on the supports of $f,g$ and $h$.) Thus, it is enough to do some limiting argument $\epsilon_2 \to \infty$ on the right hand side also.

The operators $U^{i,k}_{\varepsilon_1,\vare_2, \varphi, \omega}$ depend on $\vare_1$,  $\vare_2$ and $\varphi$ because the coefficients of the shifts are defined using the operator
$T^\varphi_{\vare_1, \vare_2}$. Let $U^{i,k}_{\varepsilon_1, \varphi, \omega}$ be the corresponding operator, but where the coefficients of the shifts are defined with the operator
$T^\varphi_{\vare_1}$ instead. Do the similar thing with the paraproducts. Dominated convergence theorem shows that it is enough to show that
$$
\bla U^{i,k}_{\varepsilon_1,\vare_2, \varphi, \omega} (f,g),h\bra \to \bla U^{i,k}_{\varepsilon_1, \varphi, \omega} (f,g),h\bra,
$$
when $\epsilon_2 \to \infty$, and similarly for the paraproducts. The convergence of the above pairings is simply based on the fact that the coefficients of the shifts defined with $T^\varphi_{\vare_1, \vare_2}$ approach to the ones
defined with $T^\varphi_{\vare_1}$. Let us quickly show the argument for the paraproduct, the same reasoning applies for the cancellative shifts.

It is enough to show that
$$
\lim_{\epsilon_2 \to \infty} \Big| \sum_{K \in \calD_{\good}} 
\bla T^\varphi_{\vare_2}(1,1), h_K \bra  \bla f \bra_K \bla g \bra_K \bla h,h_K \bra \Big| = 0.
$$
Fix $M > 0$. Notice that using $\sup_{\delta > 0 } \|T^\varphi_{\delta}(1,1)\|_{\BMO} < \infty$ and the boundness of the paraproduct there holds for every $\epsilon_2 > 0$ that
\begin{align*}
\Big|\sum_{K \colon \ell(K) < 1/M \textup{or } \ell(K)> M }
&\bla T^\varphi_{\vare_2}(1,1), h_K \bra
\bla f \bra_K \bla g \bra_K \bla h, h_K \bra\Big| \\ &\lesssim \|f\|_{L^4} \|g\|_{L^4} \Big( \sum_{K \colon \ell(K) < 1/M \textup{or } \ell(K)> M} \|\Delta_K h\|_{L^2}^2 \Big)^{1/2} = c(M),
\end{align*}
where $c(M) \to 0$ when $M \to \infty$.
This gives that
\begin{align*}
\Big|\sum_{K}
\bla T^\varphi_{\vare_2}(1,1),& h_K \bra  \bla f \bra_K \bla g \bra_K \bla h,h_K \bra \Big| \\
&\le c(M) + \Big|\sum_{K\colon 1/M \le \ell(K) \le M}
\bla T^\varphi_{\vare_2}(1,1), h_K \bra  \bla f \bra_K \bla g \bra_K \bla h,h_K \bra \Big|.
\end{align*}
The latter sum is finite as $h$ has compact support. Since $\bla T^\varphi_{\vare_2}(1,1), h_K \bra \to 0 $ when $\epsilon_2 \to \infty$,
we have that
$$
\lim_{\epsilon_2 \to \infty} \Big| \sum_{K \in \calD_{\good}} 
\bla T^\varphi_{\vare_2}(1,1), h_K \bra  \bla f \bra_K \bla g \bra_K \bla h,h_K \bra \Big| \le c(M).
$$
The claim follows by letting $M \to \infty$.

We are done with the proof of Theorem \ref{thm:main}.

\section{Sparse form domination for shifts}\label{sec:SFPARSE}
Let us first introduce a general framework of trilinear forms. 
Let $\mathcal{D}$ be a fixed dyadic grid on $\mathbb{R}^n$ and $i,j,k$ be nonnegative integers. Define the trilinear form 
\begin{align*}
\mathbb{S}^\rho(f_1,f_2,f_3) &:=\sum_{Q\in\mathcal{D}}S_Q(f_1,f_2,f_3) \\ 
&:=\sum_{Q\in\mathcal{D}}\iiint_{Q\times Q\times Q} K_Q(x_1,x_2,x_3)\prod_{j=1}^3f_j(x_j)\,dx_1\, dx_2\, dx_3,
\end{align*}
where $\rho \ge 0$. Assume it satisfies the following:
\begin{itemize}

\item[A.] The kernels $K_Q:\,Q\times Q\times Q\to \mathbb{C}$ satisfy $\|K_Q\|_{L^\infty}\leq |Q|^{-2}$.

\item[B.] There exist exponents $p,q,r \in (1,\infty)$ such that $1/p+1/q=1/r$ and a constant $\mathcal{B}$ so that 
for every subcollection $\mathcal{Q} \subset \calD$ of dyadic cubes the truncated form
\[
\mathbb{S}^\rho_{\mathcal{Q}}(f_1,f_2,f_3):=\sum_{Q\in\mathcal{Q}}S_Q(f_1,f_2,f_3).
\]
satisfies
$$
|\mathbb{S}^\rho_{\mathcal{Q}}(f_1,f_2,f_3)|
\le \mathcal{B} \| f_1\|_{L^p} \| f_2 \|_{L^q} \|f_3 \|_{L^{r'}}.
$$

\item[C.] $K_Q$ is constant on sets of the form $Q_1 \times Q_2 \times Q_3$, where $Q_i^{(\rho+1)} = Q$.
\end{itemize}
It can easily be seen that trilinear forms associated to both cancellative bilinear shifts and paraproducts fall into the above class of forms. Corollary \ref{cor:main} follows
from Thereom \ref{thm:main} by using two results from this section, namely Proposition \ref{prop:SparseInFixedGridSF} and Corollary \ref{cor:UniversalSparse}.

We state the next proposition for only dyadic grids without quadrants --  these are dyadic
grids where every sequence of cubes $I_k$ with $I_k \subsetneq I_{k+1}$ satisfy $\R^n = \bigcup_k I_k$. Since almost every 
dyadic grid has this property, this generality is already enough for us to conclude everything we need. Of course, the proposition would hold in every
grid but since this is not needed, we prefer this technical simplification. 
\begin{prop}\label{prop:SparseInFixedGridSF}
Let $\eta \in (0,1)$, $\calD$ be a dyadic grid without quadrants and $f_1, f_2, f_3$ be compactly supported and bounded functions. 
Then there exists an $\eta$-sparse collection $\mathcal{S}=\mathcal{S}(f_1,f_2,f_3, \eta)\subset \calD$, 
so that for all $\mathbb{S}^\rho$ defined in $\calD$ there holds
\begin{equation}\label{sparseforshifts}
\mathbb{S}^\rho(f_1,f_2,f_3)\lesssim_{\eta} ( \mathcal{B}+\rho) \sum_{Q\in\mathcal{S}}|Q|\prod_{j=1}^3\bla |f_j|\bra_Q=:(\mathcal{B}+\rho)\Lambda_{\mathcal{S}}(f_1,f_2,f_3).
\end{equation}
\end{prop}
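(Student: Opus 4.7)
I plan to use the standard principal-cubes-plus-local-estimate strategy. A preliminary reduction places the supports of $f_1, f_2, f_3$ inside a single dyadic cube $Q_0 \in \calD$ (possible since $\calD$ has no quadrants); the cubes $Q \supsetneq Q_0$ contribute at most $\lesssim |Q_0|\prod_j \langle |f_j|\rangle_{Q_0}$ by property A and a geometric sum in $|Q|$, which is absorbed by adding $Q_0$ to the sparse collection.

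I would then construct $\mathcal{S}$ recursively by a stopping time. Starting with $Q_0 \in \mathcal{S}$, for each $R \in \mathcal{S}$ I define its stopping children $\mathrm{SC}(R)$ as the maximal dyadic cubes $R' \subsetneq R$ with $\max_j \langle |f_j|\rangle_{R'} > A \langle |f_j|\rangle_R$ for a suitably large $A = A(\eta)$, add $\mathrm{SC}(R)$ to $\mathcal{S}$, and iterate. The weak $(1,1)$ estimate for the dyadic maximal function gives $|R \setminus \bigcup \mathrm{SC}(R)| \geq \eta |R|$, yielding $\eta$-sparsity. The heart of the argument is the local estimate
$$
\big|\mathbb{S}^\rho_{\calQ_R}(f_1,f_2,f_3)\big| \lesssim_\eta (\mathcal{B}+\rho)\, |R| \prod_{j=1}^3 \langle |f_j|\rangle_R, \qquad R \in \mathcal{S},
$$
where $\calQ_R = \{Q \in \calD : Q \subseteq R \text{ and } Q \not\subseteq R'\ \forall R' \in \mathrm{SC}(R)\}$. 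Summing over $R \in \mathcal{S}$, where each $Q$ in the original form belongs to a unique $\calQ_R$, then yields the proposition.

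To prove the local estimate I would split each $f_j$ as $f_j = \tilde f_j + \sum_{R' \in \mathrm{SC}(R)} g_j^{(R')}$, where $\tilde f_j := f_j 1_{E_R} + \sum_{R'} \langle f_j\rangle_{R'} 1_{R'}$ satisfies $\|\tilde f_j\|_{L^\infty} \lesssim A \langle |f_j|\rangle_R$ and each $g_j^{(R')} := (f_j - \langle f_j\rangle_{R'}) 1_{R'}$ has mean zero on $R'$. Expanding $S_Q(f_1,f_2,f_3)$ multilinearly produces the principal term $S_Q(\tilde f_1,\tilde f_2,\tilde f_3)$ and several mixed terms. The principal term summed over $\calQ_R$ is estimated using property B, and $\|\tilde f_j\|_{L^{p_j}} \lesssim A \langle |f_j|\rangle_R |R|^{1/p_j}$ together with $1/p + 1/q + 1/r' = 1$ gives the $\mathcal{B}|R|\prod \langle |f_j|\rangle_R$ contribution. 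In the mixed terms, property C combined with the mean-zero property of $g_j^{(R')}$ forces $\int_P g_j^{(R')} = 0$ whenever $R' \subseteq P$, i.e. whenever $\ell(R') \leq 2^{-\rho-1}\ell(Q)$. Thus nonzero contributions are confined to $O(\rho)$ scales of $Q$ per stopping child, and the trivial kernel bound of property A then yields the remaining $\rho|R|\prod \langle |f_j|\rangle_R$ contribution.

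The main obstacle will be the doubly and triply mixed terms, where factors $g_j^{(R')}$ and $g_k^{(R'')}$ arise from \emph{distinct} stopping children. For these, the non-vanishing constraint $\ell(Q) \leq 2^{\rho+1}\min(\ell(R'),\ell(R''))$ with $R' \vee R'' \subseteq Q$ requires a combinatorial estimate of the form $\sum_{R',R''} |R'||R''|/|R'\vee R''| \lesssim \rho|R|$ over pairs admitting such a $Q$. I plan to prove this by partitioning the sum by the least common ancestor $Q^* = R'\vee R''$, noting that $\sum_{R'\vee R'' = Q^*} |R'||R''| \leq |Q^*|^2$ by disjointness of stopping children across the distinct children of $Q^*$, and then summing $\sum_{Q^*} |Q^*|$ scale by scale over the $O(\rho)$ admissible scales.
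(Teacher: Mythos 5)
Your overall architecture (reduction to $Q_0$, stopping cubes via averages, Calder\'on--Zygmund decomposition into $g_j + \sum b_{j,R'}$, properties A/B/C) coincides with the paper's, and the principal term and singly-mixed terms are handled the same way. The divergence, and the place where your proposal is more complicated than needed and also has a gap, is in the doubly- and triply-mixed terms.

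The paper's key simplification, which you miss, is that in every term with at least one bad function one only ever exploits the \emph{mean-zero of a single} bad factor, say $b_1 = \sum_{Q\in\mathcal{E}} b_{1,Q}$. Property C plus $\int b_{1,Q}=0$ forces $S_R(b_{1,Q},h_2,h_3)=0$ unless $\ell(R) < 2^{\rho+1}\ell(Q)$, so after splitting $\mathcal{G}$ into $\rho$ subcollections with $\geq 2^\rho$ scale gaps, each $Q\in\mathcal{E}$ meets at most one $R$ per subcollection. The other factors $h_2,h_3$ (good or bad, it does not matter) are estimated crudely by $\|h_j 1_R\|_{L^1}\lesssim_\eta |R|\langle |f_j|\rangle_{Q_0}$, and then $\sum_R\sum_{Q: R(Q)=R}\|b_{1,Q}\|_{L^1}\leq\sum_{Q\in\mathcal{E}}\|b_{1,Q}\|_{L^1}\lesssim_\eta |Q_0|\langle|f_1|\rangle_{Q_0}$ closes the estimate. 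This completely bypasses any combinatorics over pairs or triples of stopping cubes. You should use the same device: in $S_Q(g_1^{(R')},g_2^{(R'')},\tilde f_3)$, keep the cancellation constraint only from $g_1^{(R')}$, bound $\sum_{R''\subset Q}\|g_2^{(R'')}\|_{L^1}\lesssim_\eta|Q|\langle|f_2|\rangle_R$, and you are back in the singly-mixed situation.

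Regarding the combinatorial estimate you propose, $\sum_{R',R''}|R'||R''|/|R'\vee R''|\lesssim\rho|R|$ over admissible pairs: the estimate is in fact true, but your sketch does not establish it. After reducing to $\sum_{Q^*}|Q^*|$ you assert a bound "scale by scale over the $O(\rho)$ admissible scales," but the admissible $Q^*=R'\vee R''$ do \emph{not} live on $O(\rho)$ scales; the constraint only ties $\ell(Q^*)$ to $\min(\ell(R'),\ell(R''))$, which itself varies over all scales as $(R',R'')$ ranges over $\mathcal{E}\times\mathcal{E}$. Passing through $\sum_{R'\vee R''=Q^*}|R'||R''|\leq|Q^*|^2$ and then $\sum_{Q^*}|Q^*|$ is also lossy: naive charging of each $Q^*$ to a supporting stopping child $R'$ with $|R'|\gtrsim 2^{-(\rho+1)n}|Q^*|$ produces a bound exponential in $\rho$. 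The correct proof charges to the \emph{smaller} member of each pair: for fixed $R'$ with $\ell(R')\leq\ell(R'')$ the constraint forces $Q^*$ to be one of the $\rho+1$ ancestors $R'^{(1)},\dots,R'^{(\rho+1)}$, and for each such $Q^*$ one has $\sum_{R'':R'\vee R''=Q^*}|R''|\leq|Q^*|$, giving $\sum_{R''}|R'||R''|/|R'\vee R''|\leq(\rho+1)|R'|$; summing over $R'\in\mathcal{E}$ yields $\lesssim\rho|R|$. You also leave the triply-mixed case, which would need its own version of this, unaddressed. All of this is avoidable by adopting the paper's single-cancellation device.
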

\begin{proof}
Let $Q_0 \in \calD$ be so that
it contains the supports of all of the three functions $f_j$.
Define $\mathcal{E}$ to be the collection of maximal cubes $Q \in \calD$, $Q \subset Q_0$, such that 
$$
\max \left(\frac{\bla |f_1|\bra_{Q}}{\bla |f_1|\bra_{Q_0}},\,  
\frac{\bla |f_2|\bra_{Q}}{\bla |f_2|\bra_{Q_0}},\,
\frac{\bla |f_3|\bra_{Q}}{\bla |f_3|\bra_{Q_0}}\right)> C_0.
$$
For $C_0 = C_0(\eta)$ large enough there holds
\[
\sum_{Q\in\mathcal{E}}|Q| \le (1-\eta)|Q_0|.
\]
The cube $Q_0$ is the first cube to be included in $\mathcal{S}$, and $E_{Q_0} := Q_0 \setminus \bigcup_{Q \in \mathcal{E}} Q$.

Let $\mathcal{G} = \mathcal{G}(Q_0) :=\{Q \in \calD \colon Q \subset Q_0 \text{ and } Q \not \subset Q' \text{ for every } Q' \in \mathcal{E}\}$,
and for $Q \in \calD$ write $\calD(Q)=\{R \in \calD \colon R \subset Q\}$. Then we have the decomposition 
\begin{equation}\label{eq:FirstSplit}
\begin{split}
\mathbb{S}^\rho(f_1 ,f_2 ,f_3 )
&= \sum_{\substack{Q \in \calD \\ Q \supsetneq Q_0}} S_Q (f_1,f_2,f_3)
+\mathbb{S}_{\mathcal{G}}^\rho(f_1 ,f_2 ,f_3) \\
&+\sum_{Q \in \mathcal{E}} \mathbb{S}_{\calD(Q)}^\rho(f_1 1_{Q},f_2 1_{Q},f_3 1_{Q}),
\end{split}
\end{equation}
where we applied the fact that the functions are supported in $Q_0$. 
The size property $\|K_Q \|_{L^\infty} \le |Q|^{-2}$ of the kernels implies that
\begin{equation*}
\begin{split}
\Big| \sum_{\substack{Q \in \calD \\ Q \supsetneq Q_0}} S_Q (f_1,f_2,f_3) \Big|
\le \sum_{\substack{Q \in \calD \\ Q \supsetneq Q_0}}
\frac{\| f_1 \|_{L^1}\| f_2 \|_{L^1}\| f_3 \|_{L^1}}{|Q|^2}
\sim |Q_0|\prod_{j}\bla |f_j|\bra_{Q_0}.
\end{split}
\end{equation*}

We will prove the estimate
\begin{equation}\label{eq:EstIter}
\mathbb{S}_{\mathcal{G}}^\rho(f_1 ,f_2 ,f_3 )
\lesssim_{\eta} (\mathcal{B}+\rho) |Q_0|\prod_{j}\bla |f_j|\bra_{Q_0}.
\end{equation}
From \eqref{eq:FirstSplit} and \eqref{eq:EstIter} it is then seen that the collection
$\mathcal{S}$ can be obtained by iterating this process, in the second step beginning with 
$ \mathbb{S}_{\calD(Q)}^\rho(f_1 1_{Q},f_2 1_{Q},f_3 1_{Q})$ for some $Q \in \mathcal{E}$.
Hence, to conclude the proof, it remains to show \eqref{eq:EstIter}.

We prove \eqref{eq:EstIter} by performing a Calder\'on-Zygmund decomposition to $f_j$ with respect to the collection $\mathcal{E}$, obtaining for each $j=1,2,3$ that
\[
f_j=g_j+b_j:=g_j+\sum_{Q\in\mathcal{E}}b_{j,Q},\quad b_{j,Q}:=\left(f_j-\bla f_j\bra_Q\right)1_Q.
\]
For every $Q \in \mathcal{E}$ there hold the standard properties
\[
\|g_j\|_{L^\infty}\lesssim_{\eta} \bla |f_j|\bra_{Q_0},
\quad \int_Q b_{j,Q}=0, 
\quad \|b_{j,Q}\|_{L^1}\lesssim_{\eta} |Q|\bla |f_j|\bra_{Q_0}.
\]

Decompose the left hand side of \eqref{eq:EstIter} into eight parts:
\[
\mathbb{S}^\rho_{\mathcal{G}}(g_1,g_2,g_3),\,\mathbb{S}^\rho_{\mathcal{G}}(b_1,b_2,b_3),\,\mathbb{S}^\rho_{\mathcal{G}}(g_1,g_2,b_3),\,\mathbb{S}^\rho_{\mathcal{G}}(g_1,b_2,g_3),\,\cdots
\]
The part with three good functions can be directly estimated via the boundedness of $\mathbb{S}^\rho_{\mathcal{G}}$ and the estimates $\|g_j\|_{L^\infty}\lesssim_{\eta} \bla |f_j|\bra_{Q_0}$:
$$
|\mathbb{S}^\rho_{\mathcal{G}}(g_1,g_2,g_3)| \le \mathcal{B} \|g_1\|_{L^p} \|g_2\|_{L^q}  \|g_3\|_{L^{r'}} \lesssim_{\eta} \mathcal{B} |Q_0| \prod_j \bla |f_j|\bra_{Q_0}.
$$

In all the other parts, there is at least one bad function involved. All of these terms vanish by assumption C if $\rho = 0$, so assume now that $\rho \ge 1$.
By symmetry we consider a term of the form $\mathbb{S}_{\mathcal{G}}^\rho(b_1,h_2,h_3)$, 
where $h_j$ can either be $g_j$ or $b_j$. 
We further decompose $\mathcal{G}$ into $\rho$ subcollections each of which, 
denoted by $\mathcal{G}'$, satisfies that $\ell(I_1)\ge 2^{\rho}\ell(I_2)$ 
whenever $I_1,I_2\in\mathcal{G}'$, $I_1\supsetneq I_2$. It suffices to show that
\begin{equation}\label{sparsereduce}
|\mathbb{S}_{\mathcal{G}'}^\rho(b_1,h_2,h_3)|\lesssim_{\eta} |Q_0|\prod_{j}\bla |f_j|\bra_{Q_0}.
\end{equation}

Because of the assumption C, the defining property of $\mathcal{G}'$ and the fact that $\int b_{1,Q} = 0$ for every $Q \in \mathcal{E}$, we have that
for every $Q\in\mathcal{E}$ there exists at most one $R\in\mathcal{G}'$ such that $Q\subsetneq R$ and $S_R(b_{1,Q},h_2,h_3)\neq 0$.
If such a cube $R$ exists we denote it by $R(Q)$. Therefore, 
\begin{equation}\label{bhh}
\begin{split}
|\mathbb{S}_{\mathcal{G}'}^\rho(b_1,h_2,h_3)|
&\leq \sum_{R\in\mathcal{G}'} \mathop{\sum_{Q\in\mathcal{E}}}_{R(Q)=R} |S_R(b_{1,Q},h_2,h_3)| \\
& \le \sum_{R\in\mathcal{G}'}\mathop{\sum_{Q\in\mathcal{E}}}_{R(Q)=R} 
\frac{\|b_{1,Q}\|_{L^1}\|h_2 1_R\|_{L^1}\|h_3 1_R\|_{L^1}}{|R|^2},
\end{split}
\end{equation}
where the size estimate $\|K_R\|_{L^\infty}\leq |R|^{-2}$ was applied.

Let $j =2,3$ and fix some $R \in \mathcal{G}'$ for the moment. 
We will prove $\|h_j 1_R\|_{L^1} \lesssim_{\eta}  |R| \bla |f_j|\bra_{Q_0}$.
The $L^\infty$ property of $g_j$ implies that
$
\|g_j 1_R\|_{L^1} \lesssim_{\eta}  |R| \bla |f_j|\bra_{Q_0}.
$
The estimates $\|b_{j,Q}\|_{L^1}\lesssim_{\eta} |Q|\bla |f_j|\bra_{Q_0}$ give
$$
\|b_j 1_R\|_{L^1}
=  \sum_{Q \in \mathcal{E} \colon Q \subset R} \| b_{j,Q}\|_{L^1}
\lesssim_{\eta} \sum_{Q \in \mathcal{E} \colon Q \subset R} |Q|\bla |f_j|\bra_{Q_0}
\le  |R| \bla |f_j|\bra_{Q_0}. 
$$

Now we proceed from \eqref{bhh} as
\begin{equation*}
\begin{split}
|\mathbb{S}_{\mathcal{G}'}^\rho(b_1,h_2,h_3)|
&\lesssim_{\eta} \sum_{R\in\mathcal{G}'}\mathop{\sum_{Q\in\mathcal{E}}}_{R(Q)=R}
 \|b_{1,Q}\|_{L^1} \bla |f_2|\bra_{Q_0}\bla |f_3|\bra_{Q_0} \\
& \lesssim_{\eta}  |Q_0|\bla |f_1|\bra_{Q_0} \bla |f_2|\bra_{Q_0}\bla |f_3|\bra_{Q_0}.
\end{split}
\end{equation*}
This completes the proof of \eqref{sparsereduce}, and hence the proof of the proposition.
\end{proof}

For clarity we give the proof of the following lemma -- it is a simple argument that can be extracted from the proof of Lemma 4.7 in Lacey--Mena \cite{LM}.
\begin{lem}\label{lem:UniversalSparseInAGrid}
Let $0 < \eta_1, \eta_2 < \infty$. Suppose $\calD$ is a dyadic grid and $f_1, f_2, f_3 \in L^1$. 
Then there is an $\eta_2$-sparse family $\mathcal{U} = \mathcal{U}(f_1, f_2, f_3,\eta_2) \subset \calD$ so that
for all $\eta_1$-sparse $\calS \subset \calD$ there holds that
$$
\Lambda_{\calS}(f_1, f_2, f_3) \lesssim_{\eta_1, \eta_2} \Lambda_{\mathcal{U}}(f_1, f_2, f_3).
$$
\end{lem}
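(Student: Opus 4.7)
The plan is to construct $\mathcal{U}$ as a family of principal cubes associated to the levels of the trilinear dyadic average $a(Q):=\prod_{j=1}^{3}\langle |f_j|\rangle_Q$ and then to argue in two halves. First, every $\eta_1$-sparse sum $\Lambda_{\mathcal{S}}$ is bounded by the integral of the dyadic trilinear maximal function
\[
\mathcal{M}^{\Delta}(f_1,f_2,f_3)(x):=\sup_{Q\in\calD,\,x\in Q}a(Q),
\]
using only disjointness of the major sets of $\mathcal{S}$. Second, this same integral is dominated by $\Lambda_{\mathcal{U}}$ via a layer-cake argument whose level sets match exactly the layers defining $\mathcal{U}$.

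Fix a large constant $\lambda>1$ (to be chosen in terms of $n$ and $\eta_2$). For each $k\in\Z$ let $\mathcal{U}_k$ be the collection of maximal dyadic cubes $Q\in\calD$ with $a(Q)>\lambda^k$, and set $\mathcal{U}:=\bigcup_{k\in\Z}\mathcal{U}_k$. Since $f_1,f_2,f_3\in L^1$, within each quadrant of $\calD$ one has $a(Q)\to 0$ as $\ell(Q)\to\infty$, so these maximal cubes exist and their union is the full level set. Two structural facts then follow: for $Q\in\mathcal{U}_k$,
\[
\lambda^k<a(Q)\le 2^{3n}\lambda^k,
\]
where the upper bound comes from $a(Q)\le 2^{3n}a(Q^{(1)})$ and the non-maximality of $Q^{(1)}$; and any proper descendant $Q'\in\mathcal{U}$ of such a $Q$ necessarily lies in $\mathcal{U}_l$ for some $l\ge k+1$, hence $a(Q')>(\lambda/2^{3n})\,a(Q)$.

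For an arbitrary $\eta_1$-sparse $\mathcal{S}$ with major subsets $\{E_Q\}$, the inequality $a(Q)\le\mathcal{M}^{\Delta}(f_1,f_2,f_3)(x)$ for $x\in Q$ together with the disjointness of the $E_Q$'s gives
\[
\Lambda_{\mathcal{S}}(f_1,f_2,f_3)\le\eta_1^{-1}\sum_{Q\in\mathcal{S}}a(Q)|E_Q|\le\eta_1^{-1}\int\mathcal{M}^{\Delta}(f_1,f_2,f_3).
\]
By maximality in each $\mathcal{U}_k$, $|\{\mathcal{M}^{\Delta}(f_1,f_2,f_3)>\lambda^k\}|=\sum_{Q\in\mathcal{U}_k}|Q|$, so discretizing the distribution integral along the geometric sequence $\lambda^k$ yields
\[
\int\mathcal{M}^{\Delta}(f_1,f_2,f_3)\le(\lambda-1)\sum_{k}\lambda^k\sum_{Q\in\mathcal{U}_k}|Q|\le(\lambda-1)\,\Lambda_{\mathcal{U}}(f_1,f_2,f_3),
\]
since $\lambda^k<a(Q)$ for $Q\in\mathcal{U}_k$.

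It remains to verify that $\mathcal{U}$ is $\eta_2$-sparse, which is where $\lambda$ gets pinned down. Setting $E_Q:=Q\setminus\bigcup\{Q'\in\mathcal{U}:Q'\subsetneq Q\}$, the two structural facts above imply that any proper descendant $Q'\subsetneq Q$ in $\mathcal{U}$ satisfies $\langle|f_j|\rangle_{Q'}>(\lambda/2^{3n})^{1/3}\langle|f_j|\rangle_Q$ for some $j\in\{1,2,3\}$. Therefore
\[
\bigcup\{Q'\in\mathcal{U}:Q'\subsetneq Q\}\subset\bigcup_{j=1}^{3}\bigl\{x\in Q:\,M_{\calD}(f_j1_Q)(x)>(\lambda/2^{3n})^{1/3}\langle|f_j|\rangle_Q\bigr\},
\]
and the weak-$(1,1)$ bound for $M_{\calD}$ yields $\bigl|\bigcup Q'\bigr|\le 3\cdot 2^n\lambda^{-1/3}|Q|$. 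Choosing $\lambda$ large enough depending on $n$ and $\eta_2$ forces this to be $\le(1-\eta_2)|Q|$, so $|E_Q|\ge\eta_2|Q|$, as needed. The main obstacle is only bookkeeping: threading the dependence of $\lambda$ on $\eta_2$ through the layer-cake step, so that the final implicit constant depends only on $\eta_1$, $\eta_2$, and $n$; no deeper analytic difficulty is anticipated.
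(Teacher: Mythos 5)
Your construction of $\mathcal{U}$ as principal cubes at geometric levels of $a(Q)=\prod_j\langle|f_j|\rangle_Q$ (with $\lambda$ large depending on $n,\eta_2$) is essentially identical to the paper's, and your verification of $\eta_2$-sparseness via the weak $(1,1)$ bound for $M_{\calD}$ is correct (the two ``structural facts'' you isolate are exactly the key points). The second half is where you genuinely diverge from the paper. The paper compares $\Lambda_{\mathcal{S}}$ to $\Lambda_{\mathcal{U}}$ directly: each $Q\in\mathcal{S}$ is assigned its minimal $\mathcal{U}$-parent $\pi_{\mathcal{U}}Q$, one uses the stopping bound $a(Q)\lesssim_{\eta_2} a(\pi_{\mathcal{U}}Q)$, and then the $\eta_1$-sparseness of $\mathcal{S}$ gives the Carleson packing $\sum_{Q\in\mathcal{S},\,\pi_{\mathcal{U}}Q=R}|Q|\lesssim_{\eta_1}|R|$. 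You instead route both sides through the scalar quantity $\int\mathcal{M}^{\Delta}(f_1,f_2,f_3)$: disjointness of major sets gives $\Lambda_{\mathcal{S}}\le\eta_1^{-1}\int\mathcal{M}^{\Delta}$, and the layer-cake discretization along the levels $\lambda^k$ (together with the fact that $\{\mathcal{M}^{\Delta}>\lambda^k\}$ is exactly $\bigcup_{Q\in\mathcal{U}_k}Q$ and that each $Q$ lies in at most one $\mathcal{U}_k$) gives $\int\mathcal{M}^{\Delta}\le(\lambda-1)\Lambda_{\mathcal{U}}$. Both arguments are sound; yours is slightly more conceptual in that it makes explicit the two-sided equivalence $\Lambda_{\mathcal{U}}\sim_{\eta_2}\int\mathcal{M}^{\Delta}$, so that the universality of $\mathcal{U}$ is ``explained'' by passage through a quantity independent of $\mathcal{S}$, while the paper's argument avoids any mention of $\mathcal{M}^{\Delta}$ and, as a small bonus, never needs to remark that the chain degenerates correctly when $\int\mathcal{M}^{\Delta}=\infty$ (for $L^1$ inputs this can happen; your inequalities still force $\Lambda_{\mathcal{U}}=\infty$ in that case, so nothing is lost, but it is worth noting). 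One cosmetic point: the statement of the lemma does not restrict $\calD$ to be quadrant-free, so you should phrase the existence of maximal cubes as you did -- via $a(Q)\le\prod_j\|f_j\|_{L^1}/|Q|^3\to 0$ along any increasing chain -- rather than appealing to any covering of $\R^n$.
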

\begin{proof}
We first construct the family $\mathcal{U}$. Let $C = C(\eta_2) \ge 8^n$ be a large enough constant depending on $\eta_2$. For each $k \in \Z$ define
$$
\mathcal{U}_k = \Big\{\textup{maximal cubes } Q \in \calD \textup{ so that } \prod_j \bla |f_j| \bra_Q > C^k\Big\}.
$$
Notice that if $Q \in \mathcal{U}_k$ then 
$$
C^k < \prod_j \bla |f_j| \bra_Q \le 8^n \prod_j \bla |f_j| \bra_{Q^{(1)}} \le 8^n C^k \le C^{k+1}.
$$
This means that a given $Q \in \calD$ can belong to at most one of the collections $\mathcal{U}_k$. Define
$$
\mathcal{U} = \bigcup_{k \in \Z} \mathcal{U}_k.
$$
Let us show that this is an $\eta_2$-sparse collection. Let $Q \in \mathcal{U}$ and fix $k$ so that $Q \in \mathcal{U}_k$. 
Notice first that
$$
\Big| \mathop{\bigcup_{R \in \mathcal{U}}}_{R \subsetneq Q} R\Big| = \Big| \mathop{\bigcup_{R \in \mathcal{U}_{k+1}}}_{R \subset Q} R\Big|
= \mathop{\sum_{R \in \mathcal{U}_{k+1}}}_{R \subset Q} |R|.
$$
If $R \in \mathcal{U}_{k+1}$ is such that $R \subset Q$, then
$$
\prod_j \bla |f_j| \bra_R > C^{k+1} \ge \frac{C}{8^n} \prod_j \bla |f_j| \bra_Q,
$$
and so
$$
\max_j \frac{\bla |f_j| \bra_R}{\bla |f_j| \bra_Q} > \frac{C^{1/3}}{2^n}.
$$
This implies that
$$
\Big| \mathop{\bigcup_{R \in \mathcal{U}}}_{R \subsetneq Q} R\Big|  \le \frac{3\cdot 2^n}{C^{1/3}} |Q| \le (1-\eta_2)|Q|
$$
provided $C = C(\eta_2)$ is large enough. It is now clear that the sets
$$
E_Q := Q \setminus \mathop{\bigcup_{R \in \mathcal{U}}}_{R \subsetneq Q} R, \qquad Q \in \mathcal{U},
$$
are disjoint and satisfy $|E_Q| \ge \eta_2 |Q|$, which proves that $\mathcal{U}$ is $\eta_2$-sparse.

Consider an arbitrary $\calS \subset \calD$, which is $\eta_1$-sparse. If $Q \in \calS$ satisfies
$\prod_j \bla |f_j| \bra_Q \ne 0$, then there is a cube $R \in \mathcal{U}$ so that $Q \subset R$. Let
$\pi_{\mathcal{U}} Q$ denote the minimal $R \in \mathcal{U}$ so that $Q \subset R$. Suppose $\pi_{\mathcal{U}} Q \in \mathcal{U}_k$.
Then we cannot have $\prod_j \bla |f_j| \bra_Q > C^{k+1}$ (as otherwise $\pi_{\mathcal{U}} Q$ would not be minimal), and so
$$
\prod_j \bla |f_j| \bra_Q \le C^{k+1} \le C \prod_j \bla |f_j| \bra_{\pi_{\mathcal{U}} Q} \lesssim_{\eta_2} \prod_j \bla |f_j| \bra_{\pi_{\mathcal{U}} Q}.
$$
Finally, we get
\begin{align*}
\Lambda_{\calS}(f_1, f_2, f_3) & = \sum_{R \in \mathcal{U}} \mathop{\sum_{Q \in \calS}}_{\pi_{\mathcal{U}} Q = R} |Q| \prod_j \bla |f_j| \bra_Q \\
& \lesssim_{\eta_2} \sum_{R \in \mathcal{U}} \prod_j \bla |f_j| \bra_{R} \mathop{\sum_{Q \in \calS}}_{Q^a = R} |Q| \\
& \lesssim_{\eta_1} \sum_{R \in \mathcal{U}} |R| \prod_j \bla |f_j| \bra_{R} = \Lambda_{\mathcal{U}}(f_1, f_2, f_3).
\end{align*}
\end{proof}

\begin{cor}\label{cor:UniversalSparse}
There exists dyadic grids $\mathcal{D}_i$, $i = 1, \ldots, 3^n$, with the following property. Let $\eta_1, \eta_2 \in (0,1)$.
Suppose $f_1,f_2,f_3 \in L^1$.
Then for some $i$ there exists an $\eta_2$-sparse collection $\mathcal{U}=\mathcal{U}(f_1,f_2,f_3, \eta_2)\subset \calD_i$, 
so that for all $\eta_1$-sparse collections of cubes $\mathcal{S}$ we have
\begin{equation}\label{universalsparse}
\Lambda_{\mathcal{S}}(f_1, f_2, f_3) \lesssim_{\eta_1, \eta_2} \Lambda_{\mathcal{U}}(f_1, f_2, f_3).
\end{equation}
\end{cor}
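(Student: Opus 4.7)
The plan is to combine the three-lattice theorem with Lemma \ref{lem:UniversalSparseInAGrid} applied separately in each of $3^n$ grids. Recall that one can choose $\mathcal{D}_1, \ldots, \mathcal{D}_{3^n}$ --- the grids referenced in the statement --- so that every cube $Q \subset \mathbb{R}^n$ admits an enclosing cube $\widetilde{Q} \in \mathcal{D}_{i(Q)}$ for some $i(Q) \in \{1, \ldots, 3^n\}$, with $Q \subset \widetilde{Q}$ and $\ell(\widetilde{Q}) \le 3\ell(Q)$. Lemma \ref{lem:UniversalSparseInAGrid} already supplies a universal sparse family inside any single fixed grid; the only task is to transport an arbitrary (un-gridded) sparse collection $\mathcal{S}$ into a single one of the grids $\mathcal{D}_i$, at the cost of a dimension-dependent constant.

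Given an $\eta_1$-sparse $\mathcal{S}$ with witnesses $\{E_Q\}_{Q \in \mathcal{S}}$, I would partition $\mathcal{S} = \bigsqcup_{i=1}^{3^n} \mathcal{S}_i$ according to the grid index $i(Q)$ and define $\mathcal{S}_i' := \{\widetilde{Q} : Q \in \mathcal{S}_i\} \subset \mathcal{D}_i$. Choosing for each $R \in \mathcal{S}_i'$ a single preimage $Q(R) \in \mathcal{S}_i$ with $\widetilde{Q(R)} = R$ and setting $\widetilde{E}_R := E_{Q(R)} \subset R$, the estimate $|\widetilde{E}_R| \ge \eta_1 |Q(R)| \ge 3^{-n}\eta_1 |R|$ together with disjointness of the $E_Q$'s shows that $\mathcal{S}_i'$ is $(3^{-n}\eta_1)$-sparse in $\mathcal{D}_i$. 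Grouping the sum defining $\Lambda_{\mathcal{S}_i}(f_1,f_2,f_3)$ by the value of $\widetilde{Q}$, the bound $\langle |f_j| \rangle_Q \le 3^n \langle |f_j| \rangle_{\widetilde{Q}}$ (for $Q \subset \widetilde{Q}$) combined with $\sum_{Q \in \mathcal{S}_i : \widetilde{Q} = R} |Q| \le \eta_1^{-1} |R|$ (from the sparseness of $\mathcal{S}_i$, whose witnesses $E_Q$ are disjoint subsets of $R$) yields
\[
\Lambda_{\mathcal{S}_i}(f_1,f_2,f_3) \lesssim_{\eta_1} \Lambda_{\mathcal{S}_i'}(f_1,f_2,f_3).
\]

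Finally, I would apply Lemma \ref{lem:UniversalSparseInAGrid} independently in each $\mathcal{D}_i$ (with the sparseness parameter $3^{-n}\eta_1$ in place of $\eta_1$ and the given $\eta_2$) to obtain, for each $i$, a universal $\eta_2$-sparse family $\mathcal{U}_i = \mathcal{U}_i(f_1,f_2,f_3,\eta_2) \subset \mathcal{D}_i$, independent of $\mathcal{S}$, satisfying $\Lambda_{\mathcal{S}_i'} \lesssim_{\eta_1, \eta_2} \Lambda_{\mathcal{U}_i}$. Summing over $i$ gives
\[
\Lambda_{\mathcal{S}}(f_1,f_2,f_3) \lesssim_{\eta_1,\eta_2} \sum_{i=1}^{3^n} \Lambda_{\mathcal{U}_i}(f_1,f_2,f_3) \le 3^n \max_{1 \le i \le 3^n} \Lambda_{\mathcal{U}_i}(f_1,f_2,f_3);
\]
picking $i^* = i^*(f_1,f_2,f_3,\eta_2)$ as an index realizing the maximum and setting $\mathcal{U} := \mathcal{U}_{i^*}$ produces the collection required by the corollary. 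No genuine difficulty appears here: the only mild subtlety is that $i^*$ (hence the grid in which $\mathcal{U}$ lives) is allowed to depend on $(f_1,f_2,f_3,\eta_2)$ but not on $\mathcal{S}$, which is exactly what the statement permits, and the ``max over $3^n$ grids'' step costs only an absolute factor absorbed into the implicit constant.
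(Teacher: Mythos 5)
Your proof is correct and follows the same route as the paper: transfer $\mathcal{S}$ into the $3^n$ fixed grids via the three-lattice theorem (obtaining gridded sparse families $\mathcal{S}_i'$), invoke Lemma \ref{lem:UniversalSparseInAGrid} in each grid, and take the maximizing index. The paper treats the transfer step as routine and leaves it implicit (using dilation factor $6$ rather than your $3$), whereas you write it out; there is no substantive difference.
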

\begin{proof}
We can let $(\calD_i)_i$ be any collection of $3^n$ dyadic grids with the property that for any cube $P \subset \R^n$ there exists
$R \in \bigcup_i \calD_i$ so that $P \subset R$ and $\ell(R) \le 6\ell(P)$. Then it is easy to find a $6^{-n}\eta_1$-sparse collections $\calS_i \subset \calD_i$ (depending on $\calS$)
so that
$$
\Lambda_{\mathcal{S}}(f_1,f_2,f_3) \lesssim_{\eta_1} \sum_i \Lambda_{\mathcal{S}_i}(f_1,f_2,f_3).
$$
Let $\mathcal{U}_i = \mathcal{U}_i(f_1, f_2, f_3, \eta_2) \subset \calD_i$ be the universal sparse collections given by Lemma \ref{lem:UniversalSparseInAGrid}.
Then we have that
$$
\Lambda_{\mathcal{S}}(f_1,f_2,f_3) \lesssim_{\eta_1} \sum_i \Lambda_{\mathcal{S}_i}(f_1,f_2,f_3) \lesssim_{\eta_1, \eta_2}  \sum_i \Lambda_{\mathcal{U}_i}(f_1,f_2,f_3)
\lesssim \Lambda_{\mathcal{U}_{i_0}}(f_1,f_2,f_3)
$$
for some $i_0$. We are done.
\end{proof}

\end{document}